\newtheorem*{rep@theorem}{\rep@title}
\newcommand{\newreptheorem}[2]{%
\newenvironment{rep#1}[1]{%
 \def\rep@title{#2 \ref{##1}}%
 \begin{rep@theorem}}%
 {\end{rep@theorem}}}
\newtheorem{thm}{Theorem}[section]
\newtheorem{lemma}[thm]{Lemma}
\newtheorem{proposition}[thm]{Proposition}
\newtheorem{corollary}[thm]{Corollary}
\theoremstyle{definition}
\newtheorem{definition}[thm]{Definition}
\newtheorem{definitionlemma}[thm]{Definition/Lemma}
\newtheorem{solution}[thm]{Solution}
\newtheorem{example}[thm]{Example}
\newtheorem{remark}[thm]{Remark}
\newtheorem{convention}[thm]{Convention}
\def\A{\mathbf{A}}
\def\G{\mathbf{G}}
\def\I{\mathbf{I}}
\def\N{\mathbf{N}}
\def\Q{\mathbb{Q}}
\def\R{\mathbb{R}}
\def\calC{\mathcal{C}}
\def\calM{\mathcal{M}}
\def\calN{\mathcal{N}}
\def\calP{\mathcal{P}}
\def\calS{\mathcal{S}}
\renewcommand{\l}{\lambda}
\newcommand{\s}{\sigma}
\renewcommand{\t}{\tau}
\renewcommand{\div}{\operatorname{div}}
\newcommand\ep{\varepsilon}
\newcommand{\TLM}{\textup{TLM}}
\newcommand{\Mntrop}{M_{0, n}^{\trop}}
\newcommand{\Mwtrop}{M_{0, w}^{\trop}}
\newcommand\psiiwtrop{\psi^{\trop}_{N, w}}
\newcommand\pr{\textup{pr}}
\newcommand\supp{\textup{Supp}}
\newcommand{\trop}{\textup{trop}}
\newcommand{\val}{\operatorname{val}}
\newcommand{\un}[1]{\underline{#1}}
\newlength\mylen
\newlist{mycases}{enumerate}{1}
\setlist[mycases,1]{label=\textbf{Case~\arabic*.}, 
  labelwidth=\dimexpr-\mylen-\labelsep\relax,leftmargin=0pt,align=right}
\newcommand{\repeatable}[2]{%
  \label{#1}\global\@namedef{repeatable@#1}{#2}#2
}
\renewcommand{\repeat}[1]{%
  \@ifundefined{repeatable@#1}{NOT FOUND}{$\@nameuse{repeatable@#1}$}%
  ~\ref{#1}}
\title{Intersection numbers on tropical Hassett spaces}
\author{Marvin Anas Hahn}
\address{Marvin Anas Hahn, Max Planck Institute for Mathematics in the Sciences, 04103 Leipzig, Germany}\email{mhahn@mis.mpg.de}
\author{Shiyue Li}
\address{Shiyue Li, Department of Mathematics, Brown University, Providence, RI 02912, USA}\email{shiyue\_li@brown.edu}
\subjclass[2020]{14T90, 14N35}
\keywords{Tropical intersection theory, Hassett spaces, $\psi$-classes}
\date{\today}
\begin{document}
\maketitle
\begin{abstract}
    We study the intersection of tropical $\psi$-classes on tropical heavy/light Hassett spaces, generalising a result of Kerber--Markwig for $M^{\trop}_{0, n}$. Our computation reveals that the weight of a maximal cone in an intersection has a combinatorial intepretation in terms of the underlying tropical curve and it is always nonnegative. In particular, our result specialises to that, in top dimension, the tropical intersection product coincides with its classical counterpart.
\end{abstract}

\setcounter{tocdepth}{1}
\tableofcontents

\section{Introduction}
    In this paper, we study intersection products of $\psi$-classes on $\Mwtrop$ where $w$ is heavy/light using tropical intersection theory developed by Allerman and Rau in \cite{Allermann_Rau_2010} as a generalisation of \cite{Kerber_Markwig_2009}. 
    
    Given $g\ge0, n \ge 2$ and $w \in (\Q \cap [0, 1])^n$ such that $2g + \sum w_i > 2$, Hassett \cite{HASSETT2003316} introduced the moduli space $\overline{\calM}_{g, w}$ of  $w$-stable nodal $n$-marked curves of genus $g$ as an alternate compactification of the well-studied moduli space of $n$-marked smooth curves of genus $g$, $\calM_{g, n}$. We work in genus $0$. The moduli space $\overline{\calM}_{0, w}$ parametrises reduced connected rational curves $C$ with $n$ marked points $p_1,\dots,p_n\in C$, such that
    \begin{enumerate}
        \item a collection of points $p_{i_1},\dots,p_{i_s}$ can coincide only if $\sum_{j=1}^sw_{i_j}\le 1$;
        \item the singularities of $C$ are ordinary double points, called \textit{nodes}; and
        \item for any irreducible component $T$ of $C$, 
        \begin{equation}
            \#\{\text{nodes on $T$}\} + \sum_{p_i\in T}w_i> 2.
        \end{equation}
       
    \end{enumerate}
    In particular, when $w$ is the all $1$'s vector, we recover the Deligne--Mumford--Knudsen compactification by stable nodal curves $\overline{\calM}_{g, n}$ of $\calM_{g, n}$. In \cite{Ulirsch_2015}, Ulirsch introduced the tropical analogue of $\overline{\calM}_{g, w}$ parametrising $w$-stable tropical curves of genus $g$, denoted as $M^{\trop}_{g, w}$ and studied its geometry.

    \subsection{Context}
    The family of \textit{$\psi$-classes} represents one of the most studied objects in the intersection theory of moduli spaces of curves. They parametrise curves satisfying certain tangency conditions at the marked points.
    
    While in the 90s the intersection theory of $\psi$-classes on $\overline{\calM}_{g,n}$ was resolved by the Witten--Kontsevich theorem \cite{witten1990two,kontsevich1992intersection}, intersection products of $\psi$-classes on Hassett spaces $\overline{\calM}_{g, w}$ were first studied by Alexeev and Guy in \cite{alexeev_guy_2008}. By the reduction morphism $\rho_w\colon\overline{\mathcal{M}}_{g,n}\to\overline{\mathcal{M}}_{g,w}$ constructed in \cite{HASSETT2003316}, Alexeev and Guy proved that integrals of $\psi$-classes on $\overline{\mathcal{M}}_{g,w}$ can be expressed as linear combinations of $\psi$-classes and certain boundary divisors on $\overline{\mathcal{M}}_{g,n}$. Specifically in genus $0$, \cite{moon2013log,ceyhan2009chow} related Chow classes on $\overline{\mathcal{M}}_{0,n}$ and $\overline{\mathcal{M}}_{0,w}$. Most recently, Blankers and Cavalieri \cite{Blankers_Cavalieri_2020_Wall} extended these results to intersections of $\psi$-classes in arbitrary dimension, which in turn resolved the combinatorial relation between $\psi$-class intersections on $\overline{\mathcal{M}}_{g,n}$ and $\overline{\mathcal{M}}_{g,w}$. 

    
    Tropical geometry provides a combinatorial framework for the intersection theory of $\psi$-classes in genus $0$.  In \cite{zbMATH05543460}, Mikhalkin introduced the moduli space of tropical rational $n$-marked stable curves  $M_{0,n}^{\textrm{trop}}$ as an embedded balanced rational polyhedral fan; there, tropical $\psi$-classes are certain balanced subfans of codimension $1$ corresponding to metric graphs with certain valency conditions. Then Allermann and Rau \cite{Allermann_Rau_2010} delevoped tropical intersection theory on balanced rational polyhedral fans. With this, Kerber and Markwig computed the intersection of tropical $\psi$-classes on $M_{0,n}^{\textrm{trop}}$ in \cite{Kerber_Markwig_2009}; in particular, they showed that the intersection product of tropical $\psi$-classes on $M_{0,n}^{\textrm{trop}}$ recovers their algebro-geometric counterpart on $\overline{\mathcal{M}}_{0,n}$. In the weighted case, Cavalieri, Hampe, Markwig and Ranganathan studied the cone complexes $M^{\trop}_{0, w}$ as tropical compactifications analogous to the work of \cite{gibney2010chow}, and showed that $M_{0,w}^{\trop}$ is a balanced fan if and only if $w$ is \textit{heavy/light} in \cite[Theorem A]{Cavalieri_Hampe_Markwig_Ranganathan_2016}. This allows us to employ the tropical intersection theory developed in \cite{Allermann_Rau_2010}. When $w$ is such a weight vector of length $n$ and with $m$ light weights, the tropicalization $\Sigma_w$ of the torus embedding
    \[
    \calM_{0, w} \hookrightarrow T_{w} := T^{\binom{n}{2}  - \binom{n - m}{2} - m}
    \] has the underlying cone complex $\Mwtrop$. Then the closure of $\calM_{0, w}$ in the toric variety $X(\Sigma_w)$ coincides with Hassett's compactification $\overline{\calM}_{0, w}$ and is indeed a \textit{tropical compactification} in the sense of \cite{tevelev2007compactifications}. In \cite{Kannan_2021}, Kannan, Karp and the second author employed this setup to derive the entire Chow ring $A^{\ast}(\overline{\calM}_{0, w})$, in which the classical weighted $\psi$-classes reside, using toric intersection theory on $X(\Sigma_w)$.
    
    \subsection{Main results} 
    Our main result is an explicit formula for the intersection products of tropical $\psi$-classes on $\Mwtrop$, in the case when $w$ is heavy/light. Our formula shows that the weight of each maiximal cone appearing in the intersection product has a combinatorial description given by the underlying tropical curve. See \cref{subsec:psi-classes-as-pushforward}, \cref{subsec:tropical-intersection} for weighted tropical $\psi$-classes and backgrounds on $\Mwtrop$. 
    
    \begin{thm}
    \label{thm:main-later}
    Let $n \ge 4$, $n - m \ge 2$, $0 < \ep < 1/m$, $w = (1^{(n-m)}, \ep^{(m)})$, and ${K = (k_i)_{i \in [n]} \in (\mathbb{Z}_{\ge 0})^n}$. The intersection product $\prod\limits_{i = 1}^{n} \psi_{i, w}^{k_i}$ is the weighted subfan of $\Mwtrop$ consisting of closures of the cones of codimension $\sum_i k_i$ satisfying the following conditions:
    \begin{enumerate}
        \item For each maximal cone $\sigma$ in $\prod\limits_{i = 1}^{n} \psi_{i, w}^{k_i}$ with combinatorial type $\G_{\sigma} = (G_{\sigma}, m_{\sigma})$, and for each vertex $v \in V(G_{\sigma})$, we have
        \[
        \val(v) + |m_{\sigma}^{-1}(v)| = 3 + \sum_{i \in m_{\sigma}^{-1}(v)} k_i. 
        \]
        \item The weight of a maximal cone $\sigma$ in $\prod\limits_{i = 1}^{n} \psi_{i, w}^{k_i}$ with combinatorial type $\G_{\sigma} =(G_{\sigma}, m_{\sigma})$ is the product of the tropical local multiplicities at all vertices of $G_{\sigma}$, i.e. 
        \begin{align}
        \label{eq:tau-weight}
        \omega(\sigma) &= \prod_{v \in V(G_{\sigma})} \TLM_{\sigma}(v)\\
        &= \prod_{v \in V(G_{\sigma})} \sum_{P \in \calP_{w}(v)} (-1)^{|m_{\sigma}^{-1}(v)| - \ell(P)} \binom{\sum_{i} K(P)_i}{K(P)_1,\cdots, K(P)_{\ell(P)}}.
    \end{align}
    \end{enumerate}
    \end{thm}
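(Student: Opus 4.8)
The plan is to reduce the assertion to the unweighted intersection numbers computed by Kerber--Markwig on $\Mntrop$ via the pushforward presentation of the weighted $\psi$-classes recorded in \cref{subsec:psi-classes-as-pushforward}, and then to localize the Allermann--Rau intersection product \cite{Allermann_Rau_2010} at each vertex of the underlying tropical curve. Throughout I write $\pi_w$ for the tropical reduction morphism relating $\Mntrop$ and $\Mwtrop$, and I use that the two fans have the same dimension $n-3$, so that $\pi_w$ is a weight-change morphism contracting exactly the cones along which the light markings separate.

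First I would settle the support and the valence condition (i). Working by induction on $\sum_i k_i$ and cutting with one factor $\psi_{i,w}$ at a time, I would describe, as in \cref{subsec:tropical-intersection}, each $\psi_{i,w}$ by the piecewise-linear function defining it as a Weil divisor. The key point is the weighted analogue of the Kerber--Markwig bookkeeping \cite{Kerber_Markwig_2009}: intersecting with $\psi_{i,w}$ raises by one the valence demanded at the vertex carrying the marking $i$, and the associated function is linear precisely on the cones where the local valence equality fails. Consequently the iterated divisor is supported on those cones of codimension $\sum_i k_i$ at which every vertex $v$ satisfies $\val(v) + |m_\sigma^{-1}(v)| = 3 + \sum_{i \in m_\sigma^{-1}(v)} k_i$, which is exactly (i).

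The crux is the weight formula (ii), which I would establish in two stages. The first stage is locality: by the star decomposition of $\Mwtrop$, the open star of a maximal cone $\sigma$ of combinatorial type $(G_\sigma, m_\sigma)$ splits as a product, over the vertices $v$ of $G_\sigma$, of local tropical moduli factors, one attached to each $v$. Since $\psi_{i,w}$ interacts only with the vertex carrying $i$, the Allermann--Rau multiplicity of $\sigma$ factors as $\omega(\sigma) = \prod_v \TLM_\sigma(v)$, where $\TLM_\sigma(v)$ is the local intersection number at $v$. The second stage is the computation of each $\TLM_\sigma(v)$, which I would carry out by pushing forward under $\pi_w$ on the local factor at $v$. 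Over the weighted vertex $v$, the fibre of $\pi_w$ stratifies according to the ways the markings of $m_\sigma^{-1}(v)$ are distributed onto the contracted vertices of an unweighted local model; each such distribution is recorded by a set partition $P \in \calP_w(v)$ with blocks $P_1, \ldots, P_{\ell(P)}$ and aggregated exponents $K(P)_j = \sum_{i \in P_j} k_i$. On each stratum the unweighted local multiplicity is the Kerber--Markwig multinomial $\binom{\sum_j K(P)_j}{K(P)_1, \ldots, K(P)_{\ell(P)}}$, and combining the strata through the relation between $\psi_{i,w}$ and $\psi_i$ produces the signed sum $\sum_{P \in \calP_w(v)} (-1)^{|m_\sigma^{-1}(v)| - \ell(P)} \binom{\sum_j K(P)_j}{K(P)_1, \ldots, K(P)_{\ell(P)}}$.

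I expect the main obstacle to be precisely this last inclusion--exclusion: verifying that the boundary corrections produced by the discrepancy between the weighted and unweighted $\psi$-divisors assemble, after the iterated divisor construction, into the stated alternating sum over $\calP_w(v)$ with the Möbius-type signs $(-1)^{|m_\sigma^{-1}(v)| - \ell(P)}$. This amounts to a careful analysis of the poset of separations of the light markings at $v$ and of how the multinomial weights behave under merging blocks; I expect the bulk of the technical work, including the nonnegativity asserted in the abstract, to be isolated in this single local statement.
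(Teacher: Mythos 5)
There is a genuine gap, and it sits exactly where you locate ``the main obstacle'': the emergence of the signs $(-1)^{|m_{\sigma}^{-1}(v)|-\ell(P)}$ from your pushforward picture is not an expected technicality but the entire content of the theorem, and the mechanism you propose for it does not work as stated. Tropical pushforward does not commute with intersection products, so $\prod_i\psi_{i,w}^{k_i}$ is \emph{not} $(\pr_w^{\trop})_\ast$ of anything computed fibrewise on $\Mntrop$: stratifying the fibre of $\pr_w^{\trop}$ over a cone of $\Mwtrop$ and summing the Kerber--Markwig multinomials of the strata (with positive lattice-index weights) can only ever produce a nonnegative sum of nonnegative terms, never an alternating sum. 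The signs must come from the cross terms in the expansion of $\prod_i(\psi_i-\sum_{i\in S\in\calC_w^2}D_S)^{k_i}$, i.e.\ from mixed intersections of unweighted $\psi$-classes with boundary divisors $D_S$ on $\Mntrop$, which Kerber--Markwig's theorem does not compute and your proposal never addresses. Moreover, to even relate $\prod_i\psi_{i,w}^{k_i}$ to such an expansion you need a projection formula, which requires realising each $\psi_{i,w}$ as (a rational multiple of) $\div(f_{i,w})$ on $\Mwtrop$ and pulling the function back --- precisely the content of \cref{thm:divisor-of-rat-multiple-of-psi}, which your plan neither proves nor invokes. Your Stage 1 locality claim (that the star of a maximal cone splits as a product of weighted local moduli and that the Allermann--Rau multiplicity factors accordingly) is plausible but also asserted without proof; it is an additional statement your route would have to establish, since $\psi_{i,w}$ near a light mark is not determined by the vertex data alone but by the non-contraction condition in \cref{thm:psi-class-characterisation}(ii)(b).

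For contrast, the paper never leaves $\Mwtrop$: it inducts on $\sum_i k_i$, cutting with one class at a time via \cref{cor:weight-tau}, so that the weight of a codimension-one cone $\tau$ is a \emph{minimum} over witnesses $v_T\in V_{N,w}$ of sums $\Sigma(\tau,v_T)$ of weights of adjacent maximal cones. The alternating sum then arises from an explicit count of cones (binomial coefficients counting edge distributions at the contracted vertex) combined with a multinomial identity, and the two appendix lemmata --- \cref{lem:nonnegative} (nonnegativity of the local multiplicity, proved by a recursive multinomial identity plus inclusion--exclusion) and \cref{lem:not-colliding} (identifying which witness $T^{\ast}$ achieves the minimum) --- are what make the minimum computable. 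Nonnegativity is thus an input to the paper's argument, not a corollary of it; in your plan it is deferred to the same unproven local statement as everything else. To salvage your approach you would essentially have to prove a tropical analogue of the Blankers--Cavalieri comparison, tracking all $\psi$--boundary cross terms on $\Mntrop$ and their behaviour under $(\pr_w^{\trop})_\ast$, which is a different and substantially longer paper.
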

    
    From this result, we obtain two immediate corollaries. Firstly, when $w=(1^{(n)})$, we recover the characterisation of intersection products of tropical $\psi$-classes on $M_{0,n}^\trop$ derived by Kerber and Markwig in \cite{Kerber_Markwig_2009}.
    
    \begin{corollary}[{\cite[Theorem 4.1]{Kerber_Markwig_2009}}]
    \label{cor-kerbmark}
    Let $w = (1^{(n)})$ and $K = (k_1, \ldots, k_n) \in (\mathbb{Z}_{\ge 0})^n$. The intersection product $\prod_{i} \psi_i^{k_i}$ is the weighted subfan of $\Mntrop$ consisting of closures of the cones of codimension $\sum_{i}k_i$ satsfying the following conditions:
    \begin{enumerate}
        \item For each maximal cone $\sigma$ in $\prod_{i} \psi_i^{k_i}$ with combinatorial type $\G_{\s} = (G_{\sigma}, m_{\sigma})$ and for each vertex $v \in V(G_{\sigma})$, 
        \[
        \val(v) + |m_{\sigma}^{-1}(v)| = 3 + \sum_{i \in m_{\sigma}^{-1}(v)} k_i. 
        \]
        \item The weight of a maximal cone $\sigma$ in $\prod_{i} \psi_i^{k_i}$ with combinatorial type $\G_{\s} = (G_{\sigma}, m_{\sigma})$ is
        \[
        \omega(\sigma) = \prod_{v \in V(G_{\sigma})}\binom{\sum_{i \in m_{\sigma}^{-1}(v)} k_i}{k_1,\cdots, k_{|m_{\sigma}^{-1}(v)|}}. 
        \]
    \end{enumerate}
    \end{corollary}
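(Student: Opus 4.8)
The plan is to derive the corollary by specialising \cref{thm:main-later} to the all-ones weight vector $w = (1^{(n)})$. First I would record that $w = (1^{(n)})$ is heavy/light with $m = 0$ light markings, so that $\Mwtrop$ coincides with Mikhalkin's space $\Mntrop$ and the weighted tropical $\psi$-classes $\psi_{i, w}$ become the ordinary tropical $\psi$-classes $\psi_i$. With this identification, the codimension statement (that $\prod_i \psi_i^{k_i}$ has codimension $\sum_i k_i$) and condition (i) transfer verbatim, since neither references the weight vector beyond the index set $[n]$: the valency balance $\val(v) + |m_\sigma^{-1}(v)| = 3 + \sum_{i \in m_\sigma^{-1}(v)} k_i$ is literally the same equation in both statements. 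Thus the only real content is the reduction of the weight formula (ii).

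The heart of the argument is showing that the tropical local multiplicity $\TLM_\sigma(v)$ collapses from an alternating sum to a single multinomial coefficient. Recall that $\calP_w(v)$ indexes the $w$-admissible clusterings of the markings incident to $v$: a block of markings is permitted only when its total $w$-weight does not exceed $1$, reflecting Hassett's condition that points $p_{i_1}, \dots, p_{i_s}$ may coincide only if $\sum_j w_{i_j} \le 1$. When every marking is heavy, i.e. $w_i = 1$ for all $i$, no block containing two or more markings is admissible, so the discrete partition into singletons is the unique element of $\calP_w(v)$. Hence $\calP_w(v) = \{P_0\}$ with $\ell(P_0) = |m_\sigma^{-1}(v)|$.

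Substituting $P_0$ into \eqref{eq:tau-weight} then finishes the computation: the sign exponent is $|m_\sigma^{-1}(v)| - \ell(P_0) = 0$, so the alternating sign is $+1$; the block-sums $K(P_0)$ are the individual exponents $k_i$ for $i \in m_\sigma^{-1}(v)$; and the single surviving term is the multinomial $\binom{\sum_{i \in m_\sigma^{-1}(v)} k_i}{k_1, \dots, k_{|m_\sigma^{-1}(v)|}}$. Taking the product over all vertices $v \in V(G_\sigma)$ yields precisely the Kerber--Markwig weight in (ii).

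I expect the only delicate point to be pinning down that $\calP_w(v)$ genuinely degenerates to a singleton when $m = 0$ — that is, confirming from the definition of $\calP_w(v)$ that the admissibility constraint forbids every nontrivial merge once all weights equal $1$. After that observation the collapse is a one-line substitution, and no new balancing or multiplicity computation is required beyond what \cref{thm:main-later} already supplies.
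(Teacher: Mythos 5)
Your proposal is correct and matches the paper's (implicit) argument: the paper presents \cref{cor-kerbmark} as an immediate specialisation of \cref{thm:main-later} to $w = (1^{(n)})$, and your key observation—that with all weights equal to $1$ every block of a totally $w$-unstable partition must be a singleton, so $\calP_w(v)$ reduces to the discrete partition and the alternating sum in $\TLM_\sigma(v)$ collapses to the single multinomial coefficient with sign $+1$—is precisely the content of that reduction. Your handling of the identifications $\Mwtrop = \Mntrop$ and $\psi_{i,w} = \psi_i$ (via $\calC_w^2 = \varnothing$ and $\pr_w^{\trop} = \id$ when $m = 0$) is also sound.
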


    The work of Katz in \cite{Katz_2012} draws connections between toric and tropical intersection theories and naturally leads to the expectation that the degrees of the top-dimensional intersection products of $\psi$-classes, i.e. when $\sum_{i} k_i = n - 3$, on $M_{0,w}^\trop$ coincide with their algebro-geometric counterparts. The following corollary confirms this expectation, in the sense that we indeed recover the intersection product of $\psi$-classes of Hassett spaces computed in \cite[Theorem 7.9]{alexeev_guy_2008}.

    \begin{corollary}
    \label{cor-tropalg}
    Let $w$ heavy/light. When $\sum k_i=n-3$, the intersection product $\prod\psi_i^{k_i}$ is of dimension $0$, consists of precisely one cone $\{0\}$ and the weight of the cone $\{0\}$ of $M_{0,w}^\trop$ is
    \begin{equation}
    \sum_{P\in \calP_w([n])}(-1)^{n-\ell(P)}\binom{\ell(P)-3}{K(P)_1, \cdots, K(P)_{\ell(P)}}.
    \end{equation}
    \end{corollary}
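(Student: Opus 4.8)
The plan is to obtain \cref{cor-tropalg} as the top-dimensional specialization of \cref{thm:main-later}, where the only substantive computation is a degree identity for the entries of the multinomial coefficient. First I would run the dimension count. For heavy/light $w$ the fan $\Mwtrop$ is balanced of pure dimension $n-3$ by \cite[Theorem A]{Cavalieri_Hampe_Markwig_Ranganathan_2016}, while \cref{thm:main-later} presents $\prod_{i=1}^n \psi_{i,w}^{k_i}$ as a weighted subfan of codimension $\sum_i k_i$. When $\sum_i k_i = n-3$ this codimension equals the dimension of the ambient fan, so the intersection product is a $0$-dimensional cycle; hence it is supported on the unique $0$-dimensional cone, the origin $\{0\}$, and equals $\omega(\{0\}) \cdot \{0\}$. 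This already yields the first two assertions of the corollary.

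Next I would pin down the combinatorial type of $\{0\}$ and substitute into \eqref{eq:tau-weight}. The origin is the cone of the tropical curve with no bounded edges: a single vertex $v$ carrying all $n$ legs, so $m_\sigma^{-1}(v) = [n]$, $|m_\sigma^{-1}(v)| = n$ and $\val(v) = 0$. Condition (1) of \cref{thm:main-later} then reads $0 + n = 3 + \sum_{i \in [n]} k_i = 3 + (n-3)$, confirming that $\{0\}$ genuinely appears. Because $G_\sigma$ has a single vertex, the product in \eqref{eq:tau-weight} collapses to one factor and, writing $\calP_w(v) = \calP_w([n])$, gives
\[
\omega(\{0\}) = \TLM_\sigma(v) = \sum_{P \in \calP_w([n])} (-1)^{n - \ell(P)} \binom{\sum_{j=1}^{\ell(P)} K(P)_j}{K(P)_1, \ldots, K(P)_{\ell(P)}}.
\]

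It remains to identify the top entry of the multinomial, and this is the only step that uses the hypothesis $\sum_i k_i = n-3$ in an essential way: I claim $\sum_{j=1}^{\ell(P)} K(P)_j = \ell(P) - 3$ for every $P \in \calP_w([n])$. Unwinding the definition of the block degrees, each block $P_j$ of the partition contributes a block degree that subtracts $|P_j|-1$ from the total exponent on that block, i.e.\ $K(P)_j = \big(\sum_{i \in P_j} k_i\big) - (|P_j| - 1)$, so the per-block corrections telescope against the global degree: summing over the $\ell(P)$ blocks of a partition of $[n]$ gives $\sum_j K(P)_j = \big(\sum_{i\in[n]} k_i\big) - \sum_j(|P_j|-1) = (n-3) - (n - \ell(P)) = \ell(P) - 3$. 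Substituting this into the display produces exactly the asserted weight $\sum_{P}(-1)^{n-\ell(P)}\binom{\ell(P)-3}{K(P)_1,\ldots,K(P)_{\ell(P)}}$.

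I expect this telescoping identity to be the main (if modest) obstacle: one must be careful that $P$ partitions the entire marking set $[n]$ and that the definition of $K(P)_j$ is precisely the one under which the block corrections sum to $n - \ell(P)$, so that the defect $\sum_i k_i - \sum_j K(P)_j$ is accounted for purely by the block sizes. Once these conventions are matched, the remaining work is purely notational. A final comparison with \cite[Theorem 7.9]{alexeev_guy_2008} then checks that this tropical weight agrees with the classical top intersection of $\psi$-classes on $\overline{\calM}_{0,w}$, giving the coincidence claimed in the corollary.
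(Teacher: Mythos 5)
Your proposal is correct and matches the paper's route: the corollary is exactly the specialization of \cref{thm:main-later} to $\sum_i k_i = n-3$, where the unique codimension-$(n-3)$ cone is the origin (a single vertex carrying all $n$ marks, so condition (1) reads $n = 3 + (n-3)$), and the weight is the single tropical local multiplicity $\TLM(v)$. Your telescoping identity $\sum_j K(P)_j = \ell(P) - \sum_j |P_j| + \sum_{i\in[n]} k_i = \ell(P) - 3$, which follows directly from \cref{def:P-sequence-of-K}, is precisely the bookkeeping the paper leaves implicit in calling the corollary immediate.
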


    \begin{remark}
    \label{rem:compare-katz}
        Note that it is possible to obtain \cref{cor-tropalg} by generalising \cite[Proposition 7.5]{Katz_2012} to the heavy/light weighted case. However, the application of the fan displacement rule as one lifts and intersects the Chow classes in a toric variety following \cite[Theorem 6.3]{Katz_2012} is more complicated than a direct computation via tropical intersection theory in the present paper due to less symmetry of the weight vector $w$. Moreover, our approach features the advantage of a combinatorial description of intersection products of tropical $\psi$-classes in any dimension.
    \end{remark}
    
    The starting point of the proof of \cref{thm:main-later} is a combinatorial characterisation of weighted tropical $\psi$-classes on $\Mwtrop$ in \cref{thm:psi-class-characterisation}. In \cref{subsec:psi-classes-as-pushforward}, we define tropical $\psi$-classes on $\Mwtrop$ via pushforward of linear combinations of $\psi$-classes and boundary divisors on $M_{0,n}^\trop$ along the projection morphism
    \[
    \pr_{w}^{\trop}: M_{0,n}^{\trop}\to M_{0,w}^{\trop}, 
    \] defined by contracting cones in $\Mntrop$ parametrising $w$-unstable tropical curves. \cref{thm:psi-class-characterisation} below shows that their combinatorial descriptions depend solely on the weight vector $w$. See \cref{subsec:tropical-curves} for terminology on the moduli space $\Mwtrop$. 
    \begin{thm}
    \label{thm:psi-class-characterisation}
        Let $n \ge 4$, $n - m \ge 2$, $0 < \ep < 1/m$, $w = (1^{(n-m)}, \ep^{(m)})$.  
        For $N \in [n]$, we have the following two cases. 
        \begin{enumerate}
            \item if $N$ is heavy, then the class $\psi_{N, w}$ is the balanced subfan of $\Mwtrop$ that is the union of closed cones $\sigma$ of dimension $n-4$ with associated combinatorial type $\G_{\sigma} = (G_{\sigma}, m_{\sigma})$ such that $G_{\sigma}$ has a unique vertex $v$ satisfying
            \[\val(v) + |m_{\sigma}^{-1}(v)| = 4 \text{ and } N \in m_{\sigma}^{-1}(v).\] 
            \item if $N$ is light, then $\psi_{N, w}$ is the balanced subfan of $\Mwtrop$ that is the union of closed cones $\sigma$ of dimension $n-4$ with associated combinatorial type $\G_{\sigma} = (G_{\sigma}, m_{\sigma})$ such that 
            \begin{enumerate}
                \item $G_{\sigma}$ has a unique vertex $v$ such that 
                \[\val(v) + |m_{\sigma}^{-1}(v)| = 4 \text{ and } N \in m_{\sigma}^{-1}(v).\] 
                \item $\sigma$ is not contained in maximal cones contracted by $\pr_{w}^{\trop}: \Mntrop \to \Mwtrop$. 
            \end{enumerate}
        \end{enumerate}
    \end{thm}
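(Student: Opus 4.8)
The plan is to unwind the definition of $\psi_{N, w}$ from \cref{subsec:psi-classes-as-pushforward} as a pushforward $(\pr_{w}^{\trop})_{*}(\psi_N + B_N)$, where $\psi_N$ denotes the ordinary tropical $\psi$-class on $\Mntrop$ and $B_N$ is an explicit $\mathbb{Z}$-linear combination of boundary divisors, and then to evaluate this pushforward cone by cone. Two combinatorial inputs will drive the computation. First, the known description of $\psi_N$ on $\Mntrop$ (see \cite{zbMATH05543460, Kerber_Markwig_2009}) as the weight-one subfan whose cones are exactly those of dimension $n-4$ with a unique vertex $v$ satisfying $\val(v) + |m^{-1}(v)| = 4$ and $N \in m^{-1}(v)$. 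Second, the contraction rule for $\pr_{w}^{\trop}$: for heavy/light $w$, a bounded edge with split $\{S, S^{c}\}$ is contracted precisely when one side consists entirely of light markings, since an all-light side has total weight at most $m\ep < 1$, whereas a side containing a heavy marking (together with at least one further marking) has weight strictly above $1$. Equivalently, a cone of $\Mntrop$ survives in $\Mwtrop$ iff none of its splits is all-light.

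With these in hand I would compute the tropical pushforward in the sense of \cite{Allermann_Rau_2010}. Each codimension-one cone $\sigma$ of $\psi_N$ falls into one of two types: if its combinatorial type is $w$-stable (no contracted edge) then $\pr_{w}^{\trop}$ restricts to an isomorphism of $\sigma$ onto the cone of $\Mwtrop$ of the same type, contributing the weight of $\sigma$ times the lattice index of the induced map; if the type has a contracted edge, then $\dim \pr_{w}^{\trop}(\sigma) < \dim \sigma$ and $\sigma$ contributes nothing. A short preliminary lemma shows that for every surviving cone this lattice index equals $1$, so the pushforward is again a reduced (weight-one) subfan, and balancedness is automatic because the tropical pushforward of a cycle is a cycle.

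In the heavy case the analysis is direct. Since $N$ is heavy, the vertex carrying $N$ always has weight at least $1$ and its component is never contracted, so the correction term $B_N$ is supported away from the cones carrying $N$ and $\psi_{N, w}$ agrees with $(\pr_{w}^{\trop})_{*}\psi_N$ there. Every $w$-stable codimension-one type with a unique vertex $v$ satisfying $\val(v) + |m^{-1}(v)| = 4$ and $N \in m^{-1}(v)$ is its own image under $\pr_{w}^{\trop}$ and acquires weight $1$, whereas $w$-unstable $\psi_N$-cones are contracted; this is exactly the subfan described in part (i).

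The light case carries the real content, and I expect the bookkeeping there to be the main obstacle. When $N$ is light, a $w$-stable codimension-one type satisfying the valence condition still maps isomorphically into $\Mwtrop$, so the naive pushforward $(\pr_{w}^{\trop})_{*}\psi_N$ over-counts: it retains those cones $\sigma$ which, although themselves $w$-stable, appear as a face of some maximal cone of $\Mntrop$ whose resolving edge is an all-light split and is therefore contracted. The crux is to prove that $(\pr_{w}^{\trop})_{*}B_N$ lands with weight exactly $1$ on precisely these cones, so that the subtraction deletes them and leaves exactly the cones satisfying conditions (a) and (b). I would establish this by matching, for each such $\sigma$, the contracted maximal cones containing it against the boundary divisors occurring in $B_N$ and verifying that the pushforward multiplicities coincide; the resulting weight-one subfan is then balanced for the same reason as above.
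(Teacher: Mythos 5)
Your overall strategy coincides with the paper's: unwind \cref{def-psiw}, use the criterion that a cone of $\Mntrop$ survives $\pr_w^{\trop}$ exactly when its combinatorial type is $w$-stable (equivalently, has no all-light split), and show that the boundary correction cancels precisely those $w$-stable $\psi_N$-cones that lie in contracted maximal cones. The genuine gap is that you defer the one step that constitutes essentially the entire proof: the verification that $(\pr_w^{\trop})_*\bigl(\sum_{N \in S \in \calC_w^2} D_S^{\trop}\bigr)$ has weight exactly $1$ on precisely the over-counted cones and weight $0$ on all others. ``Matching the contracted maximal cones against the boundary divisors and verifying that the multiplicities coincide'' is not routine bookkeeping; it requires the explicit expression of each $D_S^{\trop}$ as a signed sum of codimension-$1$ cones (\cref{lem:boundary-divisor-weight}) and then a case analysis over all ways $S$ can sit relative to the partition $P_1(\sigma) \sqcup P_2(\sigma) \sqcup P_3(\sigma) \sqcup P_4(\sigma)$ at the $4$-valent vertex $v$. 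Concretely one must check: (a) every cone receiving a nonzero coefficient in some $D_S^{\trop}$ with $N \notin m_{\sigma}^{-1}(v)$ is $w$-unstable, hence dies under pushforward; (b) among cones with $N \in m_{\sigma}^{-1}(v)$, the negative-coefficient cones (those with $S^c = P_j(\sigma)$, so $S$ is a union of three parts) and the positive-coefficient cones with $S = \{N\} \cup P_i(\sigma)$, $|P_i(\sigma)| \ge 2$, or with a second light singleton part besides the pair forming $S$, are all $w$-unstable; and (c) what remains is exactly one surviving contribution, with coefficient $+1$, coming from $S = \{N, x\}$ where $x$ is the unique other light mark at $v$. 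Without (a)--(c) you have not ruled out unwanted cones surviving from the negative part of some $D_S^{\trop}$, nor shown that the coefficient on an over-counted cone is $1$ rather than a larger integer (several $S$ could a priori contribute).

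Separately, your heavy-case argument is not correct as stated. The reason $\psi_{N,w} = (\pr_w^{\trop})_*\psi_N$ for heavy $N$ is not that ``$B_N$ is supported away from the cones carrying $N$''; it is that $B_N$ is the empty sum, because no $S \in \calC_w^2$ can contain a heavy index (its total weight would then exceed $1$). Your weaker statement, even if established, would not suffice: the theorem characterises all of $\psi_{N,w}$, so you must also exclude the possibility that $(\pr_w^{\trop})_*B_N$ contributes cones whose $4$-valent vertex does not carry $N$ --- which ``supported away from the cones carrying $N$'' concedes rather than rules out.
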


    Another key in the proof of \cref{thm:main-later} is a realisability result for the weighted tropical $\psi$-classes. We show in \cref{thm:divisor-of-rat-multiple-of-psi} below that the weighted tropical $\psi$-classes are realised as rational multiples of the tropical Weil divisors of a family of rational functions on $\Mwtrop$, analogous to the case of $\Mntrop$ studied in {\cite[Proposition 3.5]{Kerber_Markwig_2009}}. See \cref{subsec-rationalfunc} for terminology on rational functions on balanced polyhedral fans; see \cref{subsec:tropical-intersection} for backgrounds on tropical boundary divisors and tropical Weil divisors. 
    \begin{thm}
    \label{thm:divisor-of-rat-multiple-of-psi}
    For $n \ge 4$, $n - m \ge 2$ and $0< \varepsilon < 1/m$, $w=(1^{(n-m)},\varepsilon^{(m)})$, and $N \in [n]$, we have the following equality of tropical divisors
    \[\div(f_{N,w}) = K(N, w) \psi_{N, w},\] 
    where the coefficients $K(N, w)$ depend on $n, m$ as follows. 
    \begin{enumerate}
        \item if $n - m =2$, i.e. the weight vector $w$ contains exactly $2$ heavy weights, then 
        \[
        K(N, w) = \begin{cases} 
        m& \text{if } N \in [n-m],\\
        2m - 2  & \text{otherwise}; 
        \end{cases}
        \]
        \item if $n - m > 2$, then 
        \[
        K(N, w) = \begin{cases} 
        \binom{n - 1}{2} - \binom{m}{2} & \text{if } N \in [n-m],\\
        \binom{n-1}{2} - \binom{m-1}{2}  & \text{otherwise}. 
        \end{cases}
        \]
    \end{enumerate}
    \end{thm}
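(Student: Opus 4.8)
The plan is to compute the Weil divisor $\div(f_{N,w})$ directly and compare it cone-by-cone with the combinatorial description of $\psi_{N,w}$ furnished by Theorem \ref{thm:psi-class-characterisation}. First I would fix the rational function $f_{N,w}$ on $\Mwtrop$; by analogy with \cite[Proposition 3.5]{Kerber_Markwig_2009} this should be a sum (over appropriate index sets depending on whether $N$ is heavy or light) of the piecewise-linear functions cutting out boundary divisors, and the first task is to write $f_{N,w}$ explicitly in the coordinates of the fan $\Sigma_w$. Since $\Mwtrop$ arises as a pushforward/contraction of $\Mntrop$ along $\pr_w^{\trop}$, I expect $f_{N,w}$ to be the pushforward of the function realising $\psi_N$ on $\Mntrop$, so that the heavy case inherits the $\Mntrop$ computation almost verbatim while the light case requires accounting for the cones contracted by $\pr_w^{\trop}$.

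Next I would apply the definition of the tropical Weil divisor: for each codimension-one cone $\tau$ of the $\psi_{N,w}$-locus I must compute the balancing defect $\sum_{\sigma \succ \tau} f_{N,w}(u_{\sigma/\tau}) - f_{N,w}\bigl(\sum_\sigma u_{\sigma/\tau}\bigr)$, where $u_{\sigma/\tau}$ are primitive normal generators of the maximal cones $\sigma$ containing $\tau$. The structural claim to establish is that $\div(f_{N,w})$ is supported exactly on the cones $\sigma$ of dimension $n-4$ characterised in Theorem \ref{thm:psi-class-characterisation}, i.e. those with a unique vertex $v$ satisfying $\val(v)+|m_\sigma^{-1}(v)|=4$ and $N\in m_\sigma^{-1}(v)$ (and, in the light case, not contracted by $\pr_w^{\trop}$). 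This reduces to a local computation: around such a cone $\sigma$, the link is a two-dimensional fan corresponding to the four half-edges/markings meeting at $v$, and the divisorial contribution is a sum of lattice-length differences over the finitely many ways of resolving that vertex.

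The core of the argument is then the \emph{constancy} of the multiplicity: I must show that the integer $\div(f_{N,w})$ assigns to every such cone $\sigma$ is the \emph{same} constant $K(N,w)$, whose value splits into the four cases according to whether $n-m=2$ or $n-m>2$ and whether $N$ is heavy or light. The natural way to see this is a counting argument: $K(N,w)$ should equal the number of boundary-divisor summands in $f_{N,w}$ whose associated splitting separates the half-edges at $v$ in a way that contributes nontrivially to the balancing at $\tau$, which is a purely combinatorial count on the set $\{1,\dots,n\}$ of markings. The binomial expressions $\binom{n-1}{2}-\binom{m}{2}$ and $\binom{n-1}{2}-\binom{m-1}{2}$ strongly suggest counting unordered pairs of markings subject to a heavy/light constraint (pairs of light markings being excluded because they are forbidden to coincide under $w$-stability), and the anomalous values $m$ and $2m-2$ in the $n-m=2$ case reflect the degeneration where the two heavy markings force a rigid backbone.

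The step I expect to be the main obstacle is the light case together with the $n-m=2$ boundary case, where the interaction between the contraction morphism $\pr_w^{\trop}$ and the balancing computation is most delicate. Concretely, when $N$ is light I must verify that the cones of $\Mntrop$ contracted by $\pr_w^{\trop}$ do not spuriously contribute to $\div(f_{N,w})$ on $\Mwtrop$, and that the pushforward of the rational function is still well-defined and piecewise-linear across the walls created by contraction; this is exactly where the extra hypothesis (ii) in Theorem \ref{thm:psi-class-characterisation}(2) enters, and reconciling it with the divisor count is where the two subcases $n-m=2$ versus $n-m>2$ genuinely diverge. I would handle this by treating the backbone structure of $w$-stable curves with only two heavy markings separately, checking the defect computation on an explicit small-rank link to pin down the constants $m$ and $2m-2$, and then arguing that for $n-m>2$ the additional heavy markings restore enough freedom that the count stabilises to the binomial formulas.
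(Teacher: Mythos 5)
Your overall skeleton---compute $\div(f_{N,w})$ cone by cone, identify its support with the cones described in \cref{thm:psi-class-characterisation}, and read off $K(N,w)$ as a count of unordered pairs of markings with light--light pairs excluded---is indeed the paper's strategy, and your interpretation of the binomial coefficients is correct: this is exactly \cref{lem:primitive-vectors-number}, which computes $K(N,w)=|V_{N,w}|$, the number of two-element sets $S\not\ni N$ whose ray $v_S$ exists in $\Mwtrop$. But two of your steps have genuine gaps. First, $f_{N,w}$ is \emph{not} ``the pushforward of the function realising $\psi_N$ on $\Mntrop$'': rational functions do not push forward along $\pr_w^{\trop}$ (cycles push forward, functions pull back), and $f_N$ cannot descend because it takes the value $1$ on precisely the rays $v_S$ with $S$ a pair of light marks avoiding $N$, which are the rays being contracted, so it is not constant on fibers. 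The paper defines $f_{N,w}$ intrinsically on $\Mwtrop$ through the set $V_{N,w}$ and the unique positive representations of \cref{def:positive-rep}; no compatibility with the contraction is ever invoked, and your worry about ``walls created by contraction'' dissolves---the only trace of $\pr_w^{\trop}$ is that light--light rays are absent from $V_{N,w}$. (A related slip: light pairs are excluded not because light marks ``are forbidden to coincide''---they are precisely the marks allowed to coincide---but because a tropical curve with a vertex carrying only two light marks fails $w$-stability, so that ray does not exist in $\Mwtrop$; in the case $n-m=2$ with $N$ light, the heavy--heavy pair is additionally excluded because its complement consists entirely of light marks, which is what produces $2m-2$ instead of the binomial formula.)

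Second, and more seriously, your mechanism for the constancy of the multiplicity---counting the ``boundary-divisor summands whose splitting separates the half-edges at $v$ in a way that contributes nontrivially to the balancing at $\tau$''---is not a computation of the Weil-divisor weight. Writing each primitive generator $u_{\sigma/\tau}$ in its unique positive representation $\sum_{v_S\in V_{N,w}}c_{\sigma,S}v_S$, a pair lying inside a single part at the $4$-valent vertex of $\tau$ appears in \emph{two} of the at most three resolutions $\sigma\supsetneq\tau$, while a pair straddling two parts appears in exactly one; hence the first term $\sum_\sigma f_{N,w}(\omega(\sigma)u_{\sigma/\tau})$ of the Weil formula exceeds $|V_{N,w}|$ in general and depends on $\tau$, so no naive count of contributing splittings can equal the defect. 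The paper obtains the constant through a cancellation your plan lacks: using the relation $\sum_{v_S\in V_{N,w}}v_S=0$, the argument of the second term $f_{N,w}\bigl(\sum_\sigma\omega(\sigma)u_{\sigma/\tau}\bigr)$ is shifted into positive position, giving the factorisation $\omega_{f_{N,w}}(\tau)=|V_{N,w}|\cdot M_\tau$ with $M_\tau=\min_{v_S\in V_{N,w}}\sum_\sigma\omega(\sigma)c_{\sigma,S}$; then \cref{lem:min-is-binary} shows $M_\tau\in\{0,1\}$, equal to $1$ exactly when $\tau$ lies in $\psi_{N,w}$ (a pair straddling two parts away from $N$ witnesses the minimum $1$; a pair meeting the part of $N$ in a mark other than $N$ witnesses $0$), which simultaneously settles your ``support'' claim and the constancy. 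Without the positive-representation machinery and the relation $\sum_{v_S}v_S=0$, your local link computation has no way to evaluate $f_{N,w}$ at the balancing sum, which is the actual content of the proof.
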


\subsection{Future directions}        
    In this paper, we have focused on the combinatorics of intersection products of heavy/light weighted tropical $\psi$-classes in genus $0$. We aim to point to two related works that provide interesting future directions. Firstly, Fry extended \cite{CHMR2014moduli} to the moduli spaces of rational tropical curves with stability conditions given by a graph and an ordered partition on the vertices in \cite{fry2019tropical}. When the graph is complete multipartite, Fry \cite[Theorem 3.28]{fry2019tropical} identifies the moduli space as the Bergman fan of the graphic matroid, which is balanced. A key assumption for our work is that the moduli space of rational stable tropical curves associated to a heavy/light weight is \textit{balanced}, proved in \cite[Theorem I]{CHMR2014moduli}. It would be interesting to extend our methods to this new family of tropical moduli spaces.
    Secondly, Cavalieri, Gross and Markwig developed a theory of tropical $\psi$-classes on stable elliptic curves in \cite{cavalieri2020tropical}. In particular, the authors derive a correspondence theorem and recover the $1$-point elliptic $\psi$-integral on the tropical side \cite[Theorem A]{cavalieri2020tropical}. In light of the present paper, a generalisation of their work to the weighted case is an interesting topic of further research.
    
    \subsection{Acknowledgement} We thank Hannah Markwig and Renzo Cavalieri for several helpful discussions during the work on this manuscript. Furthermore, we thank two anonymous referees for their careful reading and insightful comments. The first author gratefully acknowledges financial support by the Max Planck Gesellschaft. The second author would like to express thanks to Bernd Sturmfels and Max Planck
    Institute for Mathematics in the Sciences for their hospitality during this project. 

\section{Moduli spaces of tropical weighted stable curves}
\label{sec:tropical-hassett-spaces}
In this section, we review the basics about $\Mwtrop$ in \cref{subsec:tropical-curves} and recall backgrounds on tropical intersection theory in \cref{subsec:tropical-intersection}. 

\subsection{Tropical $w$-stable curves and their moduli}
\label{subsec:tropical-curves}
Let $n \ge 2$, and let $w \in (\Q \cap (0, 1])^n$ be a weight vector. We start from scratch with the following definitions. 
\begin{definition}
    A \textbf{rational $n$-marked graph} is a tuple $\G = (G, m)$ such that 
    \begin{enumerate}
        \item $G$ is a finite tree with vertex set $V(G)$ and edge set $E(G)$; 
        \item $m\colon [n] \to V(G)$ is a function (called the ``marking function" of $G$).
    \end{enumerate}
\end{definition}

\begin{definition}
    A rational $n$-marked graph $\G = (G,m)$ is \textbf{$w$-stable} if for all $v \in V(G)$
    \[
        \val(v) + \sum_{i \in m^{-1}(v)} w_i > 2. 
    \]
\end{definition}

\begin{definition}
    An \textbf{abstract rational tropical $w$-stable curve} is a tuple $(\mathbf{G}, \ell)$ where $\mathbf{G}$ is a rational $w$-stable graph and $\ell\colon E(G)\to\R_{\ge0}$ is a function. 
\end{definition}
We call $\mathbf{G}$ the \textit{combinatorial type} and $\ell$ the \textit{length function} of an abstract rational tropical $w$-stable curve.

The moduli space of abstract rational tropical $w$-stable curves $M_{0, w}^{\trop}$ parametrises all abstract rational tropical $w$-stable curves. When $w=(1^{(n)})$ is the all $1$'s vector, we recover the moduli space of tropical $n$-marked curves $M^{\trop}_{0, n}$. In this paper, we focus on \textit{heavy/light} weight vectors, recalled as follows. 

\begin{definition}
Let $w = (w_1,\dots,w_n)\in \mathbb{Q}^n \cap (0,1]^n$ be a vector of weights. 
\begin{enumerate}
    \item We call $i\in\{1,\dots,n\}$ \textit{heavy in $w$} if for all $j\neq i$ we have $w_i+w_j>1$.
    \item We call $i\in\{1,\dots,n\}$ \textit{small in $w$} if $w_i+w_j>1$ implies $j$ is heavy in $w$. 
\end{enumerate}
If in addition the total weight of all small weights is less than $1$, we call them \textit{light}. 
\end{definition} 

For example, the weight vector $w = (1, 1, 3/4, 1/2)$ has all $w_i$ heavy, whereas the weight vector $w' = (1, 1, 1/3, 1/3)$ has $w'_1, w'_2$ heavy, and $w'_3, w'_4$ light. 

\begin{convention}
\label{conv:weight-vector}
    We hereafter assume that $w$ is heavy/light unless specified otherwise. As it is customary, we order the weights of a heavy/light weight vector such that all heavy weights precede light weights. Explicitly, we always take $n \ge 4$, $n - m \ge 2$, $0 < \varepsilon < 1/m$ and let $w = (1^{(n-m)}, \varepsilon^{(m)})$.
\end{convention}  

We now describe an embedding of $M_{0,w}^{\trop}$. Let $(\G, \ell)$ be an abstract rational tropical $w$-stable curve, and let $i,j\in[n]$. We define the distance function $\mathrm{dist}_{(\G, \ell)}: [n]^2 \to \R_{\ge 0}$ as follows: 
\begin{equation}
    \mathrm{dist}_{(\G, \ell)}(i, j) :=\min\limits_{P \in P(i, j)}\bigg(\sum_{e\in P}\ell(e) \bigg),
\end{equation}
where the minimum is taken over all paths $P(i, j)$ in $(\G, \ell)$ from the vertex supporting $i$ to the vertex supporting $j$. Moreover, we define the map 
\begin{align}
    \phi_{w}\colon\mathbb{R}^n&\to\mathbb{R}^{\binom{n}{2}-\binom{m}{2}}\\
    (a_1,\dots,a_n) &\mapsto(a_i+a_j), 
\end{align} for all $\{i, j\} \not\subseteq [n] \smallsetminus [n-m]$. 

Then, we consider the map
\begin{align}
    \Phi_{w}: M_{0,w}^\trop&\to \R_w := \faktor{\mathbb{R}^{\binom{n}{2}-\binom{m}{2}}}{\mathrm{Im}(\phi_w)}
\end{align}
defined by 
\begin{align}
    (\G, \ell) &\mapsto(\mathrm{dist}_{(\G, \ell)}(i,j)). 
\end{align} for all $\{i,j\} \not\subseteq [n] \smallsetminus [n-m]$. 

A similar argument in \cite[Example 7.2]{franccois2013diagonal} shows that this map embeds $M_{0,w}^\trop$ as a fan into $\R_w$, whose cones are in bijection with the combinatorial types of abstract rational tropical $w$-stable curves. In particular, a top-dimensional cone $\sigma$ with combinatorial types $\G_{\sigma} = (G_{\sigma}, m_{\sigma})$ satisfies that for every vertex $v \in V(G_{\sigma})$, $\mathrm{val}(v)+|m^{-1}(v)|=3$. 

\begin{example}
The space $M_{0, 5}^{\trop}$ is the cone over the Petersen graph (\cref{fig:tropical}, left). Its image under the contraction map $\pr_{w}^{\trop}$ for $w = (1^{(2)}, \varepsilon^{(3)})$ is the tropical Losev-Manin space $M_{0, w}^{\trop}$, which is the cone over the $1$-skeleton of the permutohedron associated with $S_3$ (\cref{fig:tropical}, right). 

\begin{figure}[ht]
    \centering
\definecolor{ao(english)}{rgb}{1, 0.4, 0.7}

\begin{tikzpicture}[style=thick]
\draw (18:2cm) -- (90:2cm) -- (162:2cm) -- (234:2cm) --
(306:2cm) -- cycle;
\draw (18:1cm) -- (162:1cm) -- (306:1cm) -- (90:1cm) --
(234:1cm) -- cycle;
\foreach \x in {18,90,162,234,306}{
\draw (\x:1cm) -- (\x:2cm);
\filldraw [](\x:2cm) circle (2pt);
\filldraw [](\x:1cm) circle (2pt);
}
\node[inner sep=1.5pt,circle,ao(english),draw,fill,label={[above,scale=0.5]:{$(14, 235)$}}] at (90:2cm) {};
\filldraw [ao(english)](90:2cm) circle (2pt);
\node[inner sep=1.5pt,circle,ao(english),draw,fill,label={[xshift=-0.2cm, scale=0.5]:{$(25, 134)$}}] at (162:2cm) {};
\filldraw [ao(english)](162:2cm) circle (2pt);
\node[inner sep=1.5pt,circle,ao(english),draw,fill,label={[xshift=-0.5cm, yshift=-0.5cm, scale=0.5]:{$(34, 125)$}}] at (234:2cm) {};

\node[inner sep=1.5pt,circle,ao(english),draw,fill,label={[xshift=-0.4cm, yshift=0.0cm, scale=0.5]:{$(15, 234)$}}] at (234:1cm) {};
\filldraw [ao(english)](234:2cm) circle (2pt);
\node[inner sep=1.5pt,circle,ao(english),draw,fill,label={[xshift=0.0cm, yshift=-0.7cm, scale=0.5]:{$(24, 135)$}}] at (18:1cm) {};
\filldraw [ao(english)](18:1cm) circle (2pt);
\node[inner sep=1.5pt,circle,ao(english),draw,fill,label={[xshift=0.2cm, scale=0.5]:{$(35, 124)$}}] at (18:2cm) {};
\node[inner sep=1.5pt,circle,draw,label={[xshift=0.4cm, yshift=0.0cm,scale=0.5]:{$(23, 145)$}}] at (90:1cm) {};
\node[inner sep=1.5pt,circle,draw,label={[xshift=0.2cm, yshift=0.0cm,scale=0.5]:{$(13, 245)$}}] at (162:1cm) {};
\node[inner sep=1.5pt,circle,draw,label={[xshift=-0.3cm, yshift=-0.5cm,scale=0.5]:{$(12, 345)$}}] at (306:1cm) {};
\node[inner sep=1.5pt,circle,draw,label={[xshift=0.3cm, yshift=-0.5cm,scale=0.5]:{$(45, 123)$}}] at (306:2cm) {};

\begin{scope}[shift={(7,0)}]
   \newdimen\R
   \R=1.7cm
   \draw (0:\R) \foreach \x in {60,120,...,360} {  -- (\x:\R) };
   \foreach \x/\l/\p in
     { 60/{$(14, 235)$}/above,
      120/{$(25, 134)$}/above,
      180/{$(34, 125)$}/left,
      240/{$(15, 234)$}/below,
      300/{$(24, 135)$}/below,
      360/{$(35, 124)$}/right
     }
     \node[inner sep=1.5pt,circle,ao(english),draw,fill,label={\p, scale=0.6:\l}] at (\x:\R) {};
\end{scope}

\end{tikzpicture}
    \caption{Each label $(ij, k\ell m)$ indicates the tropical curve with $2$ vertices connected by $1$ edge with the left vertex supporting the marks $i, j$, and the right vertex supporting the marks $k, \ell, m$. In $M_{0, n}^{\trop}$, the $1$-dimensional cones, indicated as black vertices, are contracted under $\pr_w^{\trop}$.}
    \label{fig:tropical}
\end{figure}
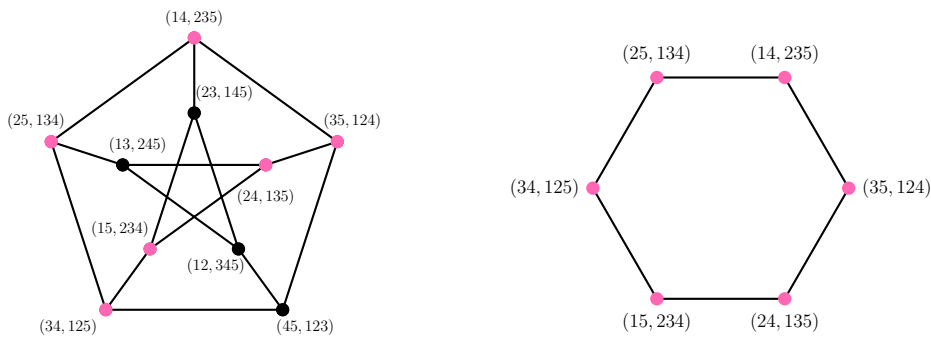
\end{example}

The reason for focusing on $M_{0,w}^{\trop}$ for heavy/light $w$ is the following theorem, and the fact that tropical intersection theory requires that the fans in question be balanced.
\begin{thm}[{\cite[Theorem I]{Cavalieri_Hampe_Markwig_Ranganathan_2016}}]
    The morphism of rational polyhedral fans
    \[
    \pr_w^{\trop}: M_{0, n}^{\trop} \to M_{0, w}^{\trop}
    \]  contracts all cones parametrising $w$-unstable tropical curves. Its image is a balanced fan if and only if the weight $w$ is heavy/light.
\end{thm}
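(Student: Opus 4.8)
The plan is to treat the two assertions separately: first that $\pr_w^{\trop}$ is a well-defined morphism of fans contracting exactly the $w$-unstable cones, and then the balancing dichotomy, which carries the real content.

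For the first assertion I would describe $\pr_w^{\trop}$ on combinatorial types by \emph{weighted stabilisation}: given a $(1^{(n)})$-stable type $\G$, repeatedly contract an edge incident to any vertex $v$ violating $\val(v)+\sum_{i\in m^{-1}(v)}w_i>2$, absorbing $v$ into a neighbour, until a $w$-stable type $\mathrm{stab}_w(\G)$ is reached. I would then check that this operation is compatible with the distance embeddings of \cref{subsec:tropical-curves}: the coordinates of $\R_w$ are the distances $\mathrm{dist}(i,j)$ over pairs $\{i,j\}\not\subseteq[n]\smallsetminus[n-m]$, and weighted stabilisation preserves each such distance while altering only distances between pairs of light marks, i.e. exactly the coordinates killed by passing to the quotient by $\mathrm{Im}(\phi_w)$. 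Hence $\Phi_w\circ\pr_w^{\trop}$ equals the coordinate projection of $\Phi_{(1^{(n)})}$, so $\pr_w^{\trop}$ is linear on each cone, integral, and collapses a cone precisely when stabilisation deletes an edge, that is, precisely on the $w$-unstable types. Since every combined-trivalent type is automatically $(1^{(n)})$-stable and is fixed by stabilisation whenever it is $w$-stable, each top cone of the image has a single full-dimensional preimage, so the pushforward assigns weight $1$ to every maximal cone of $\Mwtrop$.

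For the balancing assertion I would reduce to a local computation at each ridge (codimension-one cone). A ridge corresponds to a type with a unique vertex $v$ of combined valence $\val(v)+|m^{-1}(v)|=4$, all other vertices combined-trivalent; the maximal cones containing it are the at most three \emph{resolutions} of $v$, splitting its four flags into two pairs joined by a new edge. In $\Mntrop$ all three occur and the balancing of $M_{0,n}^{\trop}$ says their primitive generators sum to zero modulo the span of the ridge. A resolution becomes $w$-unstable exactly when one side of the split is a leaf carrying two marked points $i,j$ with $w_i+w_j\le1$; such a resolution is contracted and dropped from the image. The crux is to show that under the heavy/light hypothesis discarding these resolutions preserves the balancing relation, and that otherwise it is destroyed. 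When $w$ is heavy/light, $w_i+w_j\le1$ forces $i,j$ both light, so a discarded resolution isolates two light marks $\ell_1,\ell_2$, and its primitive direction is the separation vector $\sum_{c\neq\ell_1,\ell_2}(e_{\ell_1 c}+e_{\ell_2 c})$ restricted to retained coordinates. This is exactly $\phi_w(\mathbf 1_{\ell_1}+\mathbf 1_{\ell_2})$, because the light–light coordinates $e_{\ell_1\ell_2}$ and $e_{\ell_k c}$ (with $c$ light) are the ones omitted from $\R_w$; hence the discarded direction vanishes in $\R_w$ and the surviving resolutions still sum to zero. Ridges whose distinguished vertex would force three light marks together are themselves $w$-unstable, hence contracted, and require no check. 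Conversely, if $w$ is not heavy/light there is a pair $i,j$ with $w_i+w_j\le1$ that is not a pair of light marks; at a ridge whose four-valent vertex carries flags $\{i\},\{j\}$ and two edges to the remaining marks, exactly one resolution is discarded, but its separation direction is no longer a lineality vector of the ambient space, retaining a nonzero component along the $e_{ij}$-coordinate that is not quotiented out, so the two survivors cannot balance.

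The step I expect to be the main obstacle is the balancing bookkeeping in the quotient $\R_w$: pinning down the primitive generators of the resolutions modulo $\mathrm{Im}(\phi_w)$ and proving the clean identification of each discarded direction with $\phi_w(\mathbf 1_i+\mathbf 1_j)$, together with the full case analysis over the possible flag-types (edge versus heavy mark versus light mark) at the four-valent vertex. The converse direction is also delicate because it must be argued for an arbitrary, not-yet-normalised weight vector, where one first has to exhibit a retained coordinate witnessing the failure and then confirm that the corresponding ridge genuinely appears in the image fan.
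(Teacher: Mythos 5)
The paper does not prove this statement: it is imported verbatim from \cite{Cavalieri_Hampe_Markwig_Ranganathan_2016} (Theorem I), so there is no internal proof to compare against, and your attempt must be judged as a reconstruction of that theorem. Your ``if'' direction (descend the ridge-local balancing relation of $\Mntrop$ through the linear projection, using that each discarded resolution direction $v_{\{\ell_1,\ell_2\}}$ equals $\phi_w(\mathbf{1}_{\ell_1}+\mathbf{1}_{\ell_2})$ on the retained coordinates, hence vanishes in $\R_w$) is sound in outline, and is genuinely different from the cited proof, which identifies $\Mwtrop$ with the Bergman fan of a graphic matroid; your version still owes the lattice bookkeeping you flag (primitivity of the projected generators, index one in the pushforward) and injectivity of $\Phi_w$ on the $w$-stable cones, but these are fillable.

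The genuine gap is the ``only if'' direction, and it fails on two counts. First, your premise --- ``if $w$ is not heavy/light there is a pair $i,j$ with $w_i+w_j\le 1$ that is not a pair of light marks'' --- is not the negation of heavy/light. The negation includes the case that every index is heavy or small but the total small weight is at least $1$, e.g.\ $w=(1,1,\tfrac12,\tfrac12,\tfrac12)$; there every colliding pair consists of two small marks, so your witness pair does not exist. Second, and more fundamentally, the mechanism itself is inconsistent with the convention your forward direction relies on. For any pair one has the identity $v_{\{i,j\}}=\phi_w(\mathbf{1}_i+\mathbf{1}_j)-2e_{ij}$ on the full coordinate set; so once the ambient space omits the coordinates of colliding pairs (as it must, for otherwise even light--light separation vectors would fail to be lineality vectors and your ``if'' direction would collapse), \emph{every} discarded resolution direction dies in the quotient, for every $w$. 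Hence ``the two survivors cannot balance'' never occurs: for $w=(1,1,\tfrac12,\tfrac12,\tfrac12)$ one can check directly that the six surviving maximal cones (caterpillars with ends $\{1,x\},\{2,y\}$) satisfy balancing at every ray. This is forced structurally: $\pr_w^{\trop}$ is the restriction of an integral linear map, and the pushforward of a balanced fan along such a map is always balanced, so no argument locating the failure in a ridge-local defect of the surviving cones can be correct. What actually goes wrong for non-heavy/light $w$ is that the projection contracts strictly more than the $w$-unstable cones --- it crushes $w$-\emph{stable} cones. For $w=(1,1,\tfrac12,\tfrac12,\tfrac12)$ the $w$-stable ray $v_{\{1,2\}}$ maps to $0$, since on retained coordinates $v_{\{1,2\}}=\phi_w(0,0,1,1,1)$; for $w=(1,0.6,0.6,0.6,0.2)$ the $w$-stable maximal cone with ends $\{1,2\},\{3,4\}$ maps onto a ray, since $v_{\{1,2\}}-v_{\{3,4\}}=e_{15}=\phi_w(\mathbf{1}_5)$. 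So the image cannot be a fan realising $\Mwtrop$ at all; this degeneration, not a failed local balancing identity, is what the ``only if'' direction must detect, and your framework cannot see it.

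Two smaller repairs are needed in the forward direction: stabilisation does not ``preserve each such distance'' --- contracting an unstable leaf edge changes all distances from the absorbed light marks to every other mark; what is true (and what you actually need) is that the change lies in $\mathrm{Im}(\phi_w)$ after deleting the light--light coordinates. Also, weight $1$ on each maximal image cone requires the lattice index $[\Lambda_{\sigma'}:\pi(\Lambda_\sigma)]=1$, not merely uniqueness of the full-dimensional preimage.
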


We note that with respect to the embedding we constructed, the map $\mathrm{pr}_w^{\trop}$ corresponds to a projection $\mathbb{R}^{\binom{n}{2}}\to\mathbb{R}^{\binom{n}{2}-\binom{m}{2}}$ which forgets the coordinates indexed by $\{i,j\}\subset[n]\smallsetminus[n-m]$.

\subsection{Tropical Weil divisors and boundary divisors}
\label{subsec:tropical-intersection} 
In this section, we recall the definitions of tropical Weil divisors and boundary divisors from \cite{rau2016intersections}. Let $X$ be a rational, weighted, pure-dimensional polyhedral fan in $\mathbb{R}^n$. For each cone $\tau$ and each cone $\sigma$ containing $\tau$ such that ${\dim \sigma = \dim \tau + 1}$, there is a primitive integral vector $u_{\sigma/\tau}$ in $\mathbb{Z}^n$ such that 
\[
\mathbb{Z} u_{\sigma/\tau} + \tau \cap \mathbb{Z}^n = \sigma \cap \mathbb{Z}^n, 
\] called \textbf{the primitive generator of $\sigma$ with respect to $\tau$}. Writing the weight of each cone $\sigma$ of $X$ by $w(\sigma)$, we say that $X$ is \textbf{balanced} if for any codimension-$1$ cone $\tau$ in $X$, 
\[
\sum_{\substack{\sigma \supsetneq \tau,\\
\dim \sigma = \dim X}} w(\sigma) u_{\sigma/\tau} \in \tau.  
\]
If $X$ is balanced and of dimension $d$, we call $X$ a \textbf{$d$-cycle}. Let $|X|$ denote the support of $X$, i.e. $|X|$ is the union of all cones in $X$. 
\begin{definition}
A \textbf{nonzero rational function on a $d$-cycle $X$} is a nonzero continuous piecewise linear function $\varphi\colon |X|\to\mathbb{R}$ that is linear with a rational slope on each cone.
\end{definition}

\begin{definition}
The \textbf{tropical Weil divisor} $\div(\varphi)$ of a nonzero rational function $\varphi$ on a $d$-cycle $X$ is the weighted codimension-$1$ skeleton of $X$ with the weight for each codimension-$1$ cone $\tau$ given as follows:
\begin{equation}
    \omega_{\varphi}(\tau)=\sum\limits_{\substack{\sigma \supsetneq \tau\\ \dim \sigma := \dim \tau + 1}}\varphi(\omega_{\varphi}(\sigma)u_{\sigma/\tau})-\varphi\bigg(\sum\limits_{\substack{\sigma \supsetneq \tau\\ \dim \sigma = \dim \tau + 1}}\omega_{\varphi}(\sigma)u_{\sigma/\tau}\bigg). 
\end{equation}
\end{definition}
Let $d' < d$ and $Z$ be a $d'$-cycle in $X$ and $\varphi$ a nonzero rational function on $X$. Then, the tropical intersection product of $Z$ and $\mathrm{div}(\varphi)$ is the Weil divisor of $\varphi_{|Z}$. 

By the work of \cite{zbMATH05543460} and \cite{Kerber_Markwig_2009}, $M_{0,n}^{\trop}$ can be embedded as a rational weighted balanced polyhedral fan in a real vector space. We recall tropical boundary divisors on $\Mntrop$ as follows. To begin with, for $I \subsetneq [n]$, we denote by $v_I$ the primitive generator of the cone in $M_{0,n}^{\trop}$ that has the combinatorial type of one bounded edge, and two vertices supporting marks in $I$ and marks in $I^{c}$ respectively. The tropical boundary divisor indexed by $I$ can be defined as the tropical Weil divisor associated with the following rational function given by $I$ on $\Mntrop$. 
\begin{definition}
For $I\subsetneq [n]$, the rational function $\varphi_{I}$ on $M_{0,n}^\trop$ is the linear extension of the map defined as follow: for each primitive generator $v_{S}$,
\begin{equation}
    \varphi_I(v_{S}) :=\begin{cases}
    1\quad\textrm{if}\,I=S\,\textrm{or}\,I^c=S,\\
    0\quad\textrm{otherwise}.
    \end{cases}
\end{equation}
The \textbf{tropical boundary divisor} $D_I^{\trop}$ is defined to be the tropical Weil devisior of $\varphi_I$, i.e. 
\begin{equation}
D_I^{\trop} :=\mathrm{div}(\varphi_I).
\end{equation}
\end{definition}
We denote $D^{\trop}_{I}$ by $D_{I}$ for simplicity. In \cite[Lemma 2.5]{rau2016intersections}, Rau expresses each tropical divisor $D_{I}$ for $I \subsetneq [n]$ as an integral linear combination of codimension-1 cones as follows. Each codimension-1 cone $\sigma$ in $\Mntrop$ corresponds to a combinatorial type $\G_{\sigma} = (G_{\sigma}, m_{\sigma})$ possessing a unique vertex $v$ satisfying $\mathrm{val}(v)+|m_{\sigma}^{-1}(v)|=4$.  Upon denoting the elements in the edge set of $v$ unioned with $m_{\sigma}^{-1}(v)$ as $e_1, e_2, e_3, e_4$, we obtain a partition
\[
P(\sigma) = P_1(\sigma) \sqcup P_2(\sigma) \sqcup P_3(\sigma) \sqcup P_4(\sigma) \vdash [n], 
\] where each part 
\[
P_i(\sigma) = \begin{cases}
\{e_i\} & \text{if } e_i \in m_{\sigma}^{-1}(v), \\
\{\text{marks supported on the component of $G_{\sigma} \smallsetminus \{v\}$ that contains $e_i$\}} & \text{otherwise}.
\end{cases}
\]

\begin{figure}[h!]
    \begin{center}
    \begin{tikzpicture}[scale=0.9]
    \node (A) at (-0.75,0.75) {$P_1(\sigma)$};
    \node (B) at (-0.75,-0.75) {$P_2(\sigma)$};
    \node (C) at (0.75,0.75) {$P_3(\sigma)$};
    \node (D) at (0.75,-0.75) {$P_4(\sigma)$};
    \path (A) edge (D);
    \path (B) edge (C);
    \draw node[fill,circle,scale=0.3]{} (0,0);
    \node (V) at (0, 0.25) {$v$};
    \end{tikzpicture}
    \end{center}
    \caption{The neighborhood of the unique vertex $v$ in $G_{\sigma}$ satisfying ${\mathrm{val}(v)+|m^{-1}(v)|=4}$ for a codimension-$1$ cone $\sigma$ in $\Mntrop$. Here we visualise $m_{\sigma}^{-1}(v)$ as half-edges. }
    \label{fig:combo-type-codim-1}
\end{figure}
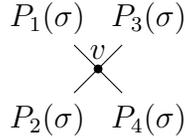

\begin{lemma}[{\cite[Lemma 2.5]{rau2016intersections}}]
\label{lem:boundary-divisor-weight}
    Let $I \subseteq [n]$. The tropical divisor $D_I$ is an integral linear combination of codimension-$1$ cones of $\Mntrop$. More precisely, \sloppy
    \[
    D_I = \sum\limits_{\dim(\sigma) = \dim(X) - 1} c(\sigma) \sigma,
    \] where the coefficient $c(\sigma)$ is 
    \begin{equation}
    \omega(\sigma):=\begin{cases}
    1\, &\textrm{if}\,I=P_i(\sigma)\cup P_j(\sigma),\, \{i, j\} \subset [4],\\
    -1\, &\textrm{if}\, I=P_i(\sigma)\textrm{ or }\, I^c=P_i(\sigma),\,  i \in [4],\\
    0\, &\textrm{otherwise}. 
    \end{cases}
    \end{equation}
\end{lemma}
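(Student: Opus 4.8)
The plan is to compute, for each codimension-$1$ cone $\sigma$ of $\Mntrop$, the weight $\omega_{\varphi_I}(\sigma)$ directly from the defining formula for a tropical Weil divisor, using the balancedness of $\Mntrop$ together with the explicit cut-metric description of the primitive generators $v_S$. Since every maximal cone of $\Mntrop$ carries weight $1$, the weight formula reduces to
\[
\omega_{\varphi_I}(\sigma) \;=\; \sum_{\sigma'\supsetneq\sigma}\varphi_I\big(u_{\sigma'/\sigma}\big)\;-\;\varphi_I\Big(\sum_{\sigma'\supsetneq\sigma} u_{\sigma'/\sigma}\Big),
\]
where $\sigma'$ ranges over the maximal cones containing $\sigma$. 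First I would fix the unique four-valent vertex $v$ of $\G_\sigma$ and the induced partition $P(\sigma)=P_1\sqcup P_2\sqcup P_3\sqcup P_4$. The three maximal cones over $\sigma$ arise from the three ways of resolving $v$ into two trivalent vertices; the new bounded edge of the resolution pairing $\{i,j\}$ induces the split $S_{ij}:=P_i\cup P_j$ of $[n]$, and its primitive generator relative to $\sigma$ is the cut-metric vector $v_{S_{ij}}$. I would record two facts about the embedding: the vector $v_S$ has $\{x,y\}$-coordinate $1$ exactly when $S$ separates $x$ from $y$; and $\varphi_I$ restricted to any trivalent cone is the linear function returning the length of the edge inducing the split $I\mid I^{c}$, or $0$ if no such edge occurs. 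In particular $\varphi_I(v_S)=1$ when $S\in\{I,I^{c}\}$ and $0$ otherwise.

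The key computational step is the balancing identity at $\sigma$. A short bookkeeping with cut metrics gives
\[
v_{S_{12}}+v_{S_{13}}+v_{S_{14}} \;=\; v_{P_1}+v_{P_2}+v_{P_3}+v_{P_4},
\]
since both sides have $\{x,y\}$-coordinate equal to $2$ when $x,y$ lie in distinct parts and $0$ when they lie in the same part (a cross-part pair is separated by exactly two of the three resolution splits, and by exactly two of the four partition splits). The right-hand side lies in $\Span(\sigma)$: each $v_{P_i}$ is a ray generator of $\sigma$ when the $i$-th branch at $v$ is a bounded edge, and is zero when that branch is a marked leg. This simultaneously verifies balancedness at $\sigma$ and pins down the vector $\sum_{ij}u_{\sigma'/\sigma}$ on which the second term of the divisor formula is evaluated.

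With these in hand I would split into cases according to how $I$ meets $P(\sigma)$. If $I=P_i\cup P_j$ for exactly one pair, the first sum contributes $1$ (only the matching resolution carries $I$ as an edge) while the second term vanishes, because $I$ is not an edge-split of the four-valent $\sigma$; hence $\omega_{\varphi_I}(\sigma)=1$. If $I=P_i$ or $I^{c}=P_i$, no resolution edge induces the split $I$, so the first sum vanishes, while the second term equals the $v_{P_i}$-coefficient of $\sum_k v_{P_k}$ in the edge basis of $\sigma$, namely $1$; hence $\omega_{\varphi_I}(\sigma)=-1$. In every remaining case $I$ is neither a resolution split nor one of the four partition splits, so both terms vanish and $\omega_{\varphi_I}(\sigma)=0$. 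These three outcomes are exactly the claimed coefficients.

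The main obstacle I anticipate is the second, genuinely nonlinear term of the divisor formula: one must know the exact vector $\sum_{ij}u_{\sigma'/\sigma}$ and evaluate the globally piecewise-linear $\varphi_I$ using the correct linear piece over $\sigma$. The balancing identity is precisely what makes this tractable, reducing the evaluation to reading off a single coefficient in the edge basis of $\sigma$. Some care is also required at the degenerate boundary where a part $P_i$ is a singleton: there $v_{P_i}=0$, the function $\varphi_{\{i\}}$ is identically zero, and the associated divisor $D_{\{i\}}$ vanishes, so that the ``$-1$'' case is understood for $I$ with $2\le |I|\le n-2$; this is consistent with the formula once one restricts to splits coming from genuine bounded edges.
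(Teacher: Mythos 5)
Your proof is correct, but note that there is nothing internal to compare it against: the paper does not prove this lemma, it imports it verbatim from \cite[Lemma 2.5]{rau2016intersections}. Your argument is a sound, self-contained verification and follows what is essentially the standard computation in that cited literature: evaluate the Weil-divisor weight formula at a codimension-one cone $\sigma$ (all maximal weights being $1$), identify the three adjacent maximal cones with the three resolutions of the four-valent vertex and their primitive generators with the cut-metric vectors $v_{P_i\cup P_j}$, and handle the nonlinear second term via the identity $v_{P_1\cup P_2}+v_{P_1\cup P_3}+v_{P_1\cup P_4}=v_{P_1}+v_{P_2}+v_{P_3}+v_{P_4}$, whose right-hand side lies in $\sigma$ itself (each $v_{P_i}$ is a ray of $\sigma$, or zero in the quotient when $P_i$ is a singleton, since $v_{\{x\}}=\phi(e_x)$). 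The coordinate check behind that identity is right: a cross-part pair is separated by exactly two of the three resolution splits and by exactly two of the four vertex splits, and same-part pairs by none. The three-way case analysis then yields $1$, $-1$, $0$ exactly as claimed, and your closing caveat that the $-1$ case must be read with $2\le|I|\le n-2$ (otherwise $\varphi_I\equiv 0$ and $D_I=0$) is a genuine edge case that the lemma's statement glosses over. One point worth spelling out in the ``otherwise'' case: $\varphi_I$ can be nonzero on rays of $\sigma$ lying deep inside a branch, namely when $I$ happens to be an edge split of $G_\sigma$ away from the four-valent vertex; the second term still vanishes because such rays appear with coefficient zero in $\sum_k v_{P_k}$, and since $\sigma$ is simplicial this coefficient extraction is unambiguous. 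Your phrase about reading off a single coefficient in the edge basis covers this, but it merits an explicit sentence.
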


\begin{example}
\label{exmp:boundary-divisor}
    We describe $D_{45}$ on $M_{0, 5}^{\trop}$. The relevant combinatorial types are shown in Figure \ref{fig:boundary-divisor-12}, where the first three combinatorial types have weight $1$ and the last one has weight $-1$ in $D_{45}$. Therefore, the tropical boundary divisor $D_{45}$ can be written as linear combination of codimension-1 cones as follows. 
    \begin{align}
    D_{45} &= \text{$1$-dimensional cones with generators } \{v_{12}, v_{13}, v_{23}\} \\
    &- \text{$1$-dimensional cone with generator } \{v_{45}\}. 
    \end{align}
    \begin{figure}[h!]
        \centering
        \includegraphics[scale=0.8]{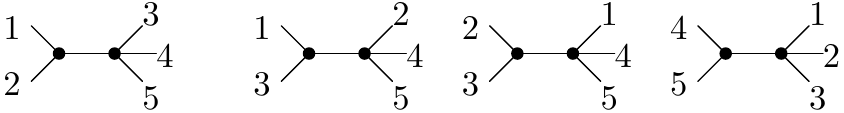}
        \caption{The four combinatorial types appear in the boundary divisor $D_{45}$, where the first three appear with weight $1$ and the last appears with weight $-1$.}
        \label{fig:boundary-divisor-12}
    \end{figure}
\end{example}

\section{$\psi$-classes on $\Mwtrop$}
\label{sec-weildiv}
In this section, we introduce weighted tropical $\psi$-classes on $M_{0, w}^{\trop}$. In \cref{subsec:psi-classes-as-pushforward}, we define weighted tropical $\psi$-classes as pushforwards of certain tropical cycles along $\pr_w^{\trop}$. In \cref{sec-cones}, we give a purely combinatorial description of the cones in these weighted tropical $\psi$-classes, which enables us to show in \cref{subsec-rationalfunc} that each weighted tropical $\psi$-class is a rational multiple of the tropical Weil divisor of a rational function, generalising the result for $w=(1^{(n)})$ in \cite{Kerber_Markwig_2009}.

\subsection{$\psi$-classes on $\Mwtrop$ as pushforward of tropical cycles}
\label{subsec:psi-classes-as-pushforward}\sloppy
Given a weight vector $w$, we define an abstract simplicial complex
\[
\calC_w := \left\{S \subseteq [n]: \sum_{i \in S} w_i \le 1\right\}. 
\] Let $\calC^2_{w}$ denote the elements of $\calC_w$ of cardinality at least $2$.  For example, for
$0 < \varepsilon < 1/3$ and $w = (1^{(2)}, \varepsilon^{(3)})$, we have $\calC_w = \{1, 2, 3, 4, 5, 34, 35, 45, 345\}$ and $\calC_{w}^2 = \{34, 35, 45, 345\}$.

Firstly, we recall tropical $\psi$-classes on $M_{0,n}^\trop$. 
\begin{definition}[{\cite[Definition 3.1]{zbMATH05543460}}]
\label{def-psimikh}
For $N \in [n]$, the tropical $\psi$-class $\psi_N$ is the weighted balanced fan of the closed cones of dimension $n-4$ in $\Mntrop$ such that for each maximal cone $\sigma$ in $\psi_N$ with corresponding combinatorial type $\G_{\sigma} = (G_{\sigma}, m_{\sigma})$, $G_{\sigma}$ have a unique vertex $v$ satisfying \[\mathrm{val}(v)+|m^{-1}(v)|=4 \text{ and } N \in m_{\sigma}^{-1}(v).\] The weight of each cone in $\psi_{N}$ is $1$. 
\end{definition}

We are now ready to define \textit{weighted} tropical $\psi$-classes on $\Mwtrop$.
\begin{definition}
\label{def-psiw}
For $w$ as in \cref{conv:weight-vector} and $N \in [n]$, the \textbf{weighted tropical $\psi$-class $\psi_{N, w}$} is defined as
\begin{equation}
    \psiiwtrop:=\left(\mathrm{pr}_w^{\trop}\right)_\ast (\psi_N^{\trop}-\sum_{N \in S \in \calC_w^2} D_{S}^{\trop}).
\end{equation}
\end{definition}

\begin{remark}
    This definition of weighted tropical $\psi$-classes is inspired by the classical $\psi$-classes on $\overline{\calM}_{g, w}$, studied first by Alexeev and Guy in \cite{alexeev_guy_2008}. The authors proved that ${\psi_N=\pi_w^\ast(\psi_{N,w})+\sum_{S \in \calC^2_{w}} D_{S}}$, where the divisor $D_{S}$ parametrises all rational curves with a ``rational-tail" component marked by $S \in \calC^{2}_w$ and $\pi_{w}$ is the projection morphism $\overline{\calM}_{g, n} \to \overline{\calM}_{g, w}$. 
\end{remark}

\begin{example}
\label{exmp:111ee-psi-4}
    Let $w = (1^{(3)}, \varepsilon^{(2)})$. Firstly, we compute $\psiiwtrop$ using \cref{def-psiw} as follows.
    \begin{align}
        \psi_{4, w}^{\trop} &= \pr_w^{\trop}(\psi_4^{\trop} - D_{45}) =  \pr^{\trop}_w(\psi_4^{\trop}) - \pr^{\trop}_w(D_{45}).
    \end{align}
    By \cref{exmp:boundary-divisor}, this becomes
    \begin{align}
    &\pr^{\trop}_w(\text{$1$-dimensional cones with generators } \{v_{12}, v_{13}, v_{15}, v_{23}, v_{25}, v_{35}\})\\
    &- \pr^{\trop}_{w}(\text{$1$-dimensional cones with generators } \{v_{12}, v_{13}, v_{23}\}). 
    \end{align}
    Thus $\psi_{4, w}^{\trop}= \text{$1$-dimensional cones with generators } \{v_{15}, v_{25}, v_{35}\}.$
\end{example}

\subsection{A combinatorial description of $\psi$-classes}
\label{sec-cones} 
We give a completely combinatorial description of tropical weighted $\psi$-classes on $\Mwtrop$, which will come handy when computing their intersection products. This description is analogous to \cite[Definition 3.1]{zbMATH05543460} and {\cite[Definition 3.1]{Kerber_Markwig_2009}. 
\begin{repthm}{thm:psi-class-characterisation}
        Let $n \ge 4$, $n - m \ge 2$, $0 < \ep < 1/m$, $w = (1^{(n-m)}, \ep^{(m)})$.  
        For $N \in [n]$, we have the following two cases. 
        \begin{enumerate}
            \item if $N$ is heavy, then the class $\psi_{N, w}$ is the balanced subfan of $\Mwtrop$ that is the union of closed cones $\sigma$ of dimension $n-4$ with associated combinatorial type $\G_{\sigma} = (G_{\sigma}, m_{\sigma})$ such that $G_{\sigma}$ has a unique vertex $v$ satisfying
            \[\val(v) + |m_{\sigma}^{-1}(v)| = 4 \text{ and } N \in m_{\sigma}^{-1}(v).\] 
            \item if $N$ is light, then $\psi_{N, w}$ is the balanced subfan of $\Mwtrop$ that is the union of closed cones $\sigma$ of dimension $n-4$ with associated combinatorial type $\G_{\sigma} = (G_{\sigma}, m_{\sigma})$ such that 
            \begin{enumerate}
                \item $G_{\sigma}$ has a unique vertex $v$ such that 
                \[\val(v) + |m_{\sigma}^{-1}(v)| = 4 \text{ and } N \in m_{\sigma}^{-1}(v).\] 
                \item $\sigma$ is not contained in maximal cones contracted by $\pr_{w}^{\trop}: \Mntrop \to \Mwtrop$. 
            \end{enumerate}
        \end{enumerate}
    \end{repthm}

\begin{example}
\label{exmp:losev-manin}
    When $w=(1^{(2)},\varepsilon^{(n-2)})$, $\Mwtrop$ is the tropical analogue of the Losev--Manin space, studied in \cite{losev2000new}. The theorem implies that $\psi_{N, w} = \varnothing$ for all $N \notin [2]$. Their algebro-geometric counterparts $\psi_{N}$ are also $0$ for all $N \notin [2]$, as a result of \cite[Lemma 5.5]{alexeev_guy_2008}. 
\end{example}

\begin{example}
    Let $w = (1^{(3)}, \ep^{(2)})$ and we compute $\psi_{1, w}$ and $\psi_{4, w}$. 
    The $1$-dimensional cones corresponding to those tropical curves with combinatorial types $\G_{\sigma} = (G_{\sigma}, m_{\sigma})$ possessing a unique vertex $v$ satisfying $\mathrm{val}(v)+|m_{\sigma}^{-1}(v)|=4$ and $4 \in m_{\sigma}^{-1}(v)$ are the $1$-dimensional cones with generators
    \[v_{12}, v_{13}, v_{15}, v_{23}, v_{25}, v_{35}.\] 
    The maximal cones in $M^{\trop}_{0, 5}$ that are contracted under the reduction map $\pr_{w}^{\trop}$ for $w = (1^{(3)}, \varepsilon^{(2)})$ are those for which $v_{45}$ is a primitive generator, which does not correspond to a $w$-stable curve.  These cones contain the codimension-$1$ cones generated by $v_{12}, v_{13}, v_{23}$, which are not present in $\psi_{N, w}$ for $N = 4, 5$. See \cref{fig:contracted-maximals-111ee}. 
    Therefore,  
    \[
    \psi_{4, w} = \text{$1$-dimensional cones with generators } \{v_{15}, v_{25}, v_{35}\}.
    \]
    In contrast for $N = 1$, we obtain:
    \[
    \psi_{1, w} = \text{$1$-dimensional cones with generators } \{v_{23}, v_{24}, v_{25},v_{34}, v_{35}\}.
    \]
    \begin{figure}[h!]
        \centering
        \includegraphics[scale=0.8]{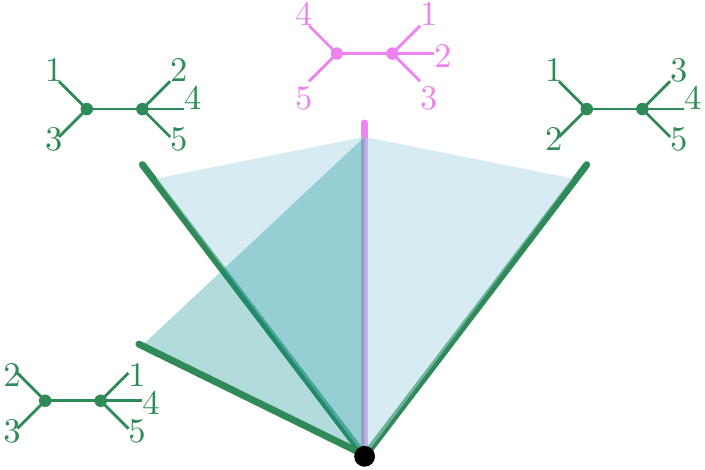}
        \caption{The maximal cones in $M^{\trop}_{0, 5}$ contracted by $\pr_{w}^{\trop}$ for $w = (1^{(3)}, \ep^{(2)})$. }
        \label{fig:contracted-maximals-111ee}
    \end{figure}
\end{example}
    
\begin{proof}[Proof of \cref{thm:psi-class-characterisation}]
For $N \in [n-m]$, the set ${\{S \in \calC_w: N \in S\}}$ is empty. By Definition \ref{def-psiw}, ${\psi_{N, w} = (\pr_{w}^{\trop})_{\ast}(\psi_{N})}$. 

Now suppose $N \in [n] \smallsetminus [n-m]$. Recall that by \cref{def-psiw}
\begin{equation}
\label{eq:psi-combo-proof}
\psiiwtrop =\left(\mathrm{pr}_w^{\trop}\right)_\ast(\psi_N^{\trop}-\sum\limits_{N \in S \in \calC_{w}^2} D_{S}^{\trop}),
\end{equation}
Firstly, we consider the cones in $\sum_{N \in S \in \calC_{w}^2} D_{S}^{\trop}$ that remain under pushforward of $\pr_{w}^{\trop}$. 
For each $S \in \calC_w^2$ containing $N$, let 
\begin{align}
\calP_S &:= \{\sigma: \dim(\sigma) = \dim(X)  - 1, N\in S =P_j(\sigma) \cup P_k(\sigma) \in \calC_w^2, \{j, k\} \subset [4]\}, \\
\calN_S &:= \{\sigma: \dim(\sigma) = \dim(X) - 1,N \in S=P_j(\sigma) \in \calC_w^2\, \text{or}\, N\notin S^c =P_j(\sigma) \in \calC_w^2,j \in [4]\}, 
\end{align} standing for those cones appearing with positive coefficients and with negative coefficients respectively in the tropical boundary divisor $D_S$. 
By \cref{lem:boundary-divisor-weight}, 
\begin{equation}
\label{eq:subtracted-ds-in-codim-one}
    D_S =\sum\limits_{\sigma \in \calP_S}\sigma - \sum\limits_{\sigma\in \calN_S} \sigma.
\end{equation}
We compute $D_S$ under the pushfoward $(\mathrm{pr}^{\trop}_w)_{\ast}$. Fix a codimension-1 cone $\sigma$ appearing with nonzero coefficients in the expression above. Let $\G_{\sigma} = (G_{\sigma}, m_{\sigma})$ be its combinatorial type satisfying that $G_{\sigma}$ has a unique vertex $v$ with $\mathrm{val}(v)+|m^{-1}(v)|=4$ and let the partition \[P_{1}(\sigma)\sqcup P_{2}(\sigma) \sqcup P_{3}(\sigma) \sqcup P_{4}(\sigma) \vdash [n]\] be given in the same manner as in \cref{subsec:tropical-intersection}; we write $P_j(\sigma)$ as $P_{j}$ for simplicity. 

There are two cases, depending on whether the mark $N \in m^{-1}(v)$ or $N \notin m^{-1}(v)$. 

\un{Case 1}: If the marking $N \notin m^{-1}(v)$, then $\{N\} \subsetneq P_i$ for some $i \in [4]$. Since $G$ is a finite tree, there exists a vertex $u \in V(G)$ such that $\val(u) = 1$ and that $m^{-1}(u) \subseteq P_i$. Since $P_i \subseteq S \in \calC_w^2$ by the fact that $\sigma$ has nonzero coefficient in Equation \ref{eq:subtracted-ds-in-codim-one}, we have that
    \[
    \sum_{k \in m^{-1}(u)} w_k \le \sum_{k \in P_i} w_k \le \sum_{k \in S} w_k < 1.
    \]
Thus, \[\val(u) + \sum_{k \in m^{-1}(u)} w_k < 1 + 1 = 2,\] implying that $\G_{\s}$ is not a $w$-stable combinatorial type. Therefore, the pushforward under $\pr_{w}^{\trop}$ of $\sigma$ is empty. 
    
\un{Case 2}: If the marking $N \in m^{-1}(v)$, then without loss of generality let $P_4 = \{N\}$. Since $|S| \ge 2$ and by the fact that $\sigma$ has nonzero coefficient in Equation \ref{eq:subtracted-ds-in-codim-one}, there are then $4$ cases, grouped by whether $S = \{N\} \cup P_i$ for some $i \in [4]$ or $S = \{N\} \sqcup P_i \sqcup P_j$ for $\{i,j\}\subset [4]$, and further subdivided by cardinality considerations. 

    \un{Case 2(a)}: If $S = \{N\} \cup P_i$ for some $i \in [3]$ and $|P_i| \ge 2$, then by the same argument as in Case 1 when $N \notin m^{-1}(v)$, there exists a vertex $u \in V(G)$ such that $m^{-1}(u) \subseteq P_i \subset S \in \calC_{w}^2$ and $\val(u) = 1$. The vertex $u$ is then a witness that $\G_{\sigma}$ is not $w$-stable and thus its image pushforward under $\pr_{w}^{\trop}$ is trivial. 
    
    \un{Case 2(b)}: If $S = \{N\} \cup P_i$ for some $i \in [3]$, $|P_i| = 1$ and one of the remaining parts, say $P_{j}$ for $j \in [4] \smallsetminus \{4, i\}$ has cardinality $1$. If we have that $P_j\subset [n-m]$, we have 
        \[
        \val(v) + \sum_{k \in m^{-1}(v)} w_k = 2 + 2\varepsilon >2. 
        \]
        Then $\G_{\sigma}$ is $w$-stable and thus the pushforward preserves $\sigma$.
        
    \un{Case 2(c)}: If $S = \{N\} \cup P_i$ for some $i \in [3]$, $|P_i| = 1$ and one of the remaining parts, say $P_{j}$ for $j \in [4] \smallsetminus \{4, i\}$ has cardinality $1$. If we have that $P_j\subset[n]\smallsetminus[n-m]$, we have 
        \[
        \val(v) + \sum_{k \in m^{-1}(v)} w_k = 1 + 3\varepsilon < 2. 
        \]
        Then $\G_{\sigma}$ is not $w$-stable and thus the pushforward under $\pr_{w}^{\trop}$ is trivial.
                
    \un{Case 2(d)}: If $S = \{N\} \cup P_i$ for some $i \in [3]$, $|P_i| = 1$, and both of the remaining parts have cardinality greater than $1$. Then the $w$-stability depends on the remaining parts $P_j$ for $j \notin \{i, 4\}$.
    
    \un{Case 2(e)}: If  $S = \{N\} \sqcup P_i \sqcup P_j$ for $\{i,j\}\subset[3]$, it is an easy exercise using similar arguments to see that $\G_{\sigma}$ is not $w$-stable and pushforward under $\mathrm{pr}_w^{\trop}$ is trivial. 

Now, denoting 
\[
\calS_{N, S} := \{\sigma \colon N \in m^{-1}(v), \textrm{if } |P_j(\sigma)| = 1\,\textrm{with }j\in[3],\,\textrm{then } P_j\subset\{[n-m]\}\},
\]
Equation \ref{eq:psi-combo-proof} becomes 
\begin{align}
\label{eq:psi-divisor-of-2}
    \psi_{N,w} &=(\mathrm{pr}_w^{\trop})_\ast \Bigg(\psi_N -\sum_{\sigma\in \calS_{N,S}}  \sigma \Bigg) = \sum_{\sigma \in \calS_{N, S}} \sigma. 
\end{align}
Note that the cones in $\calS_{N,S}$ are exactly those cones with combinatorial types ${\G_{\sigma} = (G_{\sigma}, m_{\sigma})}$ such that the unique vertex $v$ in $G_{\sigma}$ does not carry two light marks. Therefore, they are not contained in a maximal dimensional cone in $\Mntrop$ that is contracted. This proves the result. 
\end{proof}

\subsection{$\psi$-classes on $\Mwtrop$ as Weil divisors of rational functions}
\label{subsec-rationalfunc}
    
In this section, we define a rational function $f_{N,w}$ for each $N \in [n]$ such that the tropical Weil divisor $\div(f_{N,w})$ is a multiple of $\psi_{N, w}$ in $\Mwtrop$. 
\begin{definition}
Let $w$ be heavy/light as in \cref{conv:weight-vector}. Let $I\subset[n]$ of cardinality $1<|I|<n-1$. We define a vector $v_I\in\mathbb{R}^{\binom{n}{2}-\binom{m}{2}}$ as follows. Each coordinate of $\mathbb{R}^{\binom{n}{2}-\binom{m}{2}}$ is indexed by a tuple $T \in \binom{[n]}{2}\smallsetminus\binom{[m]}{2}$. For each coordinate indexed by ${T = \{t_1, t_2\} \not\subseteq  [n] \smallsetminus [n-m]}$, we define
\begin{equation}
    (v_I)_T=\begin{cases}
    1\quad \textrm{if}\,|I\cap T|=1\\
    0\quad \textrm{otherwise}
    \end{cases} \in \mathbb{R}^{\binom{n}{2} - \binom{m}{2}}.
\end{equation}
\end{definition}
For example, when $n = 4$, $w=(1^{(4)})$, $I = 24 \subset [4]$, the vector $v_{24} = (1, 0, 1, 1, 0, 1)$. The motivation for defining such $v_I$ is as follows. The vector $v_I$ is the primitive vector in $M_{0,w}^\trop$ corresponding to a $1$-dimensional cone with two vertices and one bounded edge of length $1$, such that the markings in $I$ on are supported on one endpoint of the bounded edge and $[n] \smallsetminus I$ is supported on the other endpoint. Each $1$-dimensional cone in $M^{\trop}_{0, w}$ is the primitive vector $v_I$ for some $I \subset [n]$. Note that $v_I=v_{[n]\smallsetminus I}$. 

For $N \in [n]$, we define 
\begin{equation}
V_{N,w}=\{v_I\in M_{0,w}^\trop\colon N\notin I \text{ and } |I|=2\} 
\end{equation}
By a similar argument as in \cite[Lemma 2.3]{Kerber_Markwig_2009}, we obtain that for any heavy/light vector $w=(1^{(n-m)},\varepsilon^{(m)})$ and $N\in[n]$ that the span of elements in $V_{N, w}$ is precisely the quotient space $\mathbb{R}_w$, i.e. 
\[
\langle V_{N,w}\rangle=\mathbb{R}_w = \faktor{\mathbb{R}^{\binom{n}{2}-\binom{m}{2}}}{\mathrm{Im}(\phi_w)}.
\]

The next definition/lemma follows from the same ideas as \cite[Definition/Lemma 2.5]{Kerber_Markwig_2009}.

\begin{definitionlemma}
\label{def:positive-rep}
    For any $N \in [n]$, 
    any primitive generator $v_I$ has a unique \textbf{positive representation in $V_{N, w}$}
    \[
    v_I = \sum_{v_{S} \in V_{N, w}} c_{S} v_{S} 
    \]
    satisfying that
    \begin{enumerate}
        \item if $N \in I$, then $S \subseteq [n] \smallsetminus I$; otherwise, $S \subseteq I$; 
        \item for all $v_S \in V_{N, k}$ , we have $c_S\ge0$;
        \item there exists $v_S \in V_{N, k}$ with $c_S=0$.  
    \end{enumerate}
\end{definitionlemma}

For example, for the weight vector $w = (1^{(n)}, \varepsilon^{(m)})$, and $N = 4$, $V_{4, w}$ is $\{v_{12}, v_{13}, v_{23}, v_{25}, v_{35}\}$. The positive representation of $v_{34}$ is $v_{34} = v_{12} + v_{15} + v_{25}$, and the positive representation of $v_{13}$ is $v_{13} = v_{24} + v_{25}$. 

Motivated by this definition, we define the following function that is linear on each cone of $\Mwtrop$. 
\begin{definitionlemma}
For each $N \in[n]$, we define a rational function $f_{N,w}$ on $M^{\trop}_{0, w}$ by 
\[
f_{N, w}(v_I) = \begin{cases}
1 & \text{if } v_I \in V_{N, w}, \\
0 & \text{otherwise}. 
\end{cases}
\] and linearly extend $f_{N,w}$ to $\mathbb{R}^{\binom{n}{2}-\binom{m}{2}}$. In particular, the function $f_{N,w}$ is linear on each cone of $M_{0,w}^\trop$. 
\end{definitionlemma}

For $w=(1^{(n)})$, the linearity of $f_{N, w}$ on each cone was derived in \cite[Lemma 3.3]{Kerber_Markwig_2009} where they defined an analogous rational function $f_N$ on $\Mntrop$. Kerber-Markwig also showed that the Weil divisor $\div(f_{N})$ is a multiple of the tropical $\psi$-class $\psi_{N}$ on $\Mntrop$, which reduces the question of intersecting tropical $\psi$-classes to intersecting Weil divisors with tropical $\psi$-classes. 

\begin{proposition}{{\cite[Proposition 3.5]{Kerber_Markwig_2009}}}
Let $w=(1^{(n)})$. With notation as above, we obtain that
\begin{equation}
    \mathrm{div}(f_{N})=\binom{n-1}{2}\psi_{N}.
\end{equation}
\end{proposition}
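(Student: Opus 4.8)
The plan is to compute the Weil divisor $\div(f_N)$ directly from the definition and show that every codimension-$1$ cone of $\Mntrop$ receives weight exactly $\binom{n-1}{2}$ times its weight in $\psi_N$. First I would fix a codimension-$1$ cone $\tau$ of $\Mntrop$ whose combinatorial type has a unique vertex $v$ with $\val(v)+|m^{-1}(v)|=4$, inducing the four-part partition $P_1\sqcup P_2\sqcup P_3\sqcup P_4\vdash[n]$ of \cref{fig:combo-type-codim-1}. The maximal cones $\sigma\supsetneq\tau$ correspond to the three ways of resolving this $4$-valent vertex into two $3$-valent vertices, i.e. to the three pairings $\{P_i\cup P_j\mid P_k\cup P_\ell\}$ of the four parts; their primitive generators with respect to $\tau$ are the vectors $v_{P_i\cup P_j}$. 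I would then apply the Weil-divisor formula
\[
\omega_{f_N}(\tau)=\sum_{\sigma\supsetneq\tau}f_N\bigl(u_{\sigma/\tau}\bigr)-f_N\Bigl(\sum_{\sigma\supsetneq\tau}u_{\sigma/\tau}\Bigr),
\]
so the whole computation reduces to evaluating $f_N$ on the three primitive generators and on their sum.

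The key step is the evaluation of $f_N$, which by its definition is linear on each cone but must be computed via the positive representation of \cref{def:positive-rep}. Here I would split into the two cases already visible in the characterisation of $\psi_N$. If $\tau$ lies in $\psi_N$, then $N\in m^{-1}(v)$, say $N\in P_4$ after relabelling; the three resolutions separate $N$ from the other three parts in a controlled way, and evaluating $f_N$ on each $u_{\sigma/\tau}$ and on their sum should yield a net contribution of $\binom{n-1}{2}$, matching $\binom{n-1}{2}\cdot 1=\binom{n-1}{2}\,\omega_{\psi_N}(\tau)$. If instead $N\notin m^{-1}(v)$, so $\tau\notin\psi_N$, I expect the three terms to cancel against the correction term, giving $\omega_{f_N}(\tau)=0$, again matching $\binom{n-1}{2}\cdot 0$. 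The arithmetic in both cases is driven by counting, for each pairing, how many index pairs $T=\{t_1,t_2\}$ with $N\notin T$ straddle the induced bipartition of $[n]$, since $f_N(v_I)$ records exactly the number of such separating pairs once $v_I$ is written in the basis $V_{N}$; this is where the combinatorial identity producing $\binom{n-1}{2}$ emerges.

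The main obstacle will be handling the evaluation of $f_N$ on vectors $v_I$ that are not themselves in $V_N$: by \cref{def:positive-rep} one must first rewrite $v_I$ in its positive representation $\sum_{v_S\in V_N}c_S v_S$ and then use linearity, and keeping track of the coefficients $c_S$ across the three resolutions of $\tau$ is the delicate bookkeeping. A clean way to organise this is to observe that $f_N$ extends to the linear functional on $\R^{\binom{n}{2}}$ sending the coordinate indexed by $\{i,j\}$ to $1$ precisely when $N\in\{i,j\}$ is false, so that $f_N(v_I)=\#\{\{t_1,t_2\}:N\notin\{t_1,t_2\},\,|I\cap\{t_1,t_2\}|=1\}$, and this reconciles with the positive-representation definition because the two differ by an element of $\Im(\phi)$ on which $f_N$ is well-defined on the quotient. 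With this reformulation the three generator-evaluations become a pure count over pairs of indices, the correction term subtracts the count for the recombined vector, and the identity $\binom{n-1}{2}$ follows from summing the sizes $|P_i|$ weighted by the pairings. I would close by noting that since $\tau$ was an arbitrary codimension-$1$ cone, the equality of weights holds cone-by-cone, giving $\div(f_N)=\binom{n-1}{2}\psi_N$ as cycles.
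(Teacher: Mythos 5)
Your overall skeleton — the Weil-divisor formula at a codimension-$1$ cone $\tau$, the three resolutions of the $4$-valent vertex with primitive generators $v_{P_i\cup P_j}$, and the case split according to whether $N\in m^{-1}(v)$ — is the same as in the paper, whose proof of the general heavy/light statement (\cref{thm:divisor-of-rat-multiple-of-psi}) specialises to this proposition at $w=(1^{(n)})$. But your key step, the ``clean reformulation'' of $f_N$ as the globally linear functional $L(x)=\sum_{\{i,j\}\not\ni N}x_{ij}$ on $\mathbb{R}^{\binom{n}{2}}$, is false, and the proof cannot be completed along those lines. First, $L$ does not vanish on $\Im(\phi)$ (one computes $L(\phi(a))=(n-2)\sum_{i\ne N}a_i$), so it does not even define a function on the quotient in which $M_{0,n}^{\trop}$ is embedded; your claim that the discrepancy is ``an element of $\Im(\phi)$'' therefore does not repair anything. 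Second, its ray values are wrong: for $|I|=2$ with $N\notin I$ one gets $L(v_I)=2(n-3)$, whereas $f_N(v_I)=1$. Third, and fatally: if $f_N$ were computed by evaluating any single linear functional by linearity, then by the very formula you wrote down, $\omega_{f_N}(\tau)=\sum_\sigma f_N(u_{\sigma/\tau})-f_N\big(\sum_\sigma u_{\sigma/\tau}\big)=0$ for every $\tau$, so the proposition would read $0=\binom{n-1}{2}\psi_N$. The divisor is nonzero precisely because $f_N$ is only piecewise linear, and its nonlinearity comes from the normalisation in the positive representation (all coefficients nonnegative, at least one equal to zero) — note that $V_N$ is not a basis, as it has one element more than the dimension of the ambient space, with the single relation $\sum_{v_S\in V_N}v_S=0$. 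That normalisation is exactly the bookkeeping you set out to avoid, and it is where the whole content of the statement sits.

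The failure persists under the most charitable reading of your count, namely $g(x)=\sum_{\{i,j\}\not\ni N}\mathrm{dist}_x(i,j)$ (total pairwise distance among the marks other than $N$), which \emph{is} a genuine conewise-linear function on $M_{0,n}^{\trop}$. Take $n=5$, $N=4$, and the ray $\tau=v_{45}$, which does not lie in $\psi_4$: the three maximal cones containing $\tau$ have primitive generators $v_{12},v_{13},v_{23}$, whose sum equals $v_{45}$ in the quotient, and $g(v_{12})=g(v_{13})=g(v_{23})=4$ while $g(v_{45})=3$, so $\omega_g(\tau)=12-3=9\ne 0$; meanwhile the strictly linear reading gives weight $0$ on every cone, including the rays of $\psi_4$ where the weight must be $\binom{4}{2}=6$. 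So each way of making your pair count precise fails on one of your two cases. The paper's actual mechanism is unavoidable: write each $u_{\sigma/\tau}=\sum_{v_S\in V_N}c_{\sigma,S}v_S$ in its positive representation, so the first term of the Weil formula is $\sum_S\sum_\sigma c_{\sigma,S}$; then re-normalise $\sum_\sigma u_{\sigma/\tau}$ into a positive representation using $\sum_{v_S\in V_N}v_S=0$, which subtracts $M_\tau:=\min_{v_S\in V_N}\sum_\sigma c_{\sigma,S}$ from every coefficient and yields $\omega_{f_N}(\tau)=|V_N|\,M_\tau=\binom{n-1}{2}M_\tau$. The substance of the proof (\cref{lem:min-is-binary}) is that this minimum equals $1$ when $\tau\in\psi_N$ and $0$ otherwise; it is precisely this minimum that any linearisation of $f_N$ erases.
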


Our next theorem is an analogous result on $\Mwtrop$, generalising the above proposition. 
\begin{repthm}{thm:divisor-of-rat-multiple-of-psi}
For $n \ge 4$, $n - m \ge 2$ and $0< \varepsilon < 1/m$, $w=(1^{(n-m)},\varepsilon^{(m)})$, and $N \in [n]$, we have the following equality of tropical divisors
    \[\div(f_{N,w}) = K(N, w) \psi_{N, w},\] 
    where the coefficients $K(N, w)$ depend on $n, m$ as follows. 
    \begin{enumerate}
        \item if $n - m =2$, i.e. the weight vector $w$ contains exactly $2$ heavy weights, then 
        \[
        K(N, w) = \begin{cases} 
        m& \text{if } N \in [n-m],\\
        2m - 2  & \text{otherwise}; 
        \end{cases}
        \]
        \item if $n - m > 2$, then 
        \[
        K(N, w) = \begin{cases} 
        \binom{n - 1}{2} - \binom{m}{2} & \text{if } N \in [n-m],\\
        \binom{n-1}{2} - \binom{m-1}{2}  & \text{otherwise}. 
        \end{cases}
        \]
    \end{enumerate}
\end{repthm}

\begin{remark}
    Note that when $n - m =2$ and $N \in [n] \smallsetminus [n-m]$, the tropical $\psi$-class $\psi_{N,w}$ is empty; see \cref{exmp:losev-manin}. In particular, we have that $\div(f_{N,w})$ is empty.  
\end{remark}

Now we are positioned to adapt the result in \cite[Remark 3.4]{Kerber_Markwig_2009} to the weighted case, allowing us to write the tropical weighted $\psi$-classes as a rational multiple of the tropical Weil divisors. 
\begin{proof}[Proof of Theorem {~\ref{thm:divisor-of-rat-multiple-of-psi}}]
    Fix $N \in [n]$, and denote $M^{\trop}_{0, w}$. For each codimension-$1$ cone $\tau$ in $X$, we compute the weight of $f_{N,w}$ on $\tau$. 
    Recall that 
    \[
    \omega_{f_{N,w}}(\tau) = \sum\limits_{\substack{\sigma \supsetneq \tau \\ \dim \sigma = \dim \tau + 1}} f_{N,w}(\omega(\sigma) u_{\sigma / \tau}) - f_{N,w} \bigg(\sum\limits_{\substack{\sigma \supsetneq \tau \\ \dim \sigma = \dim \tau + 1}} \omega(\sigma) u_{\sigma / \tau}\bigg). 
    \]
    Since $\tau$ has codimension-$1$ in $\Mwtrop$, it parametrises tropical curves with combinatorial types $\G_{\sigma} = (G_{\sigma}, m_{\sigma})$ such that $G_{\sigma}$ has a unique vertex $v$ satisfying $\val(v) + m_{\sigma}^{-1}(v) = 4$. We again obtain a partition 
    \[
    P_1(\sigma) \sqcup P_{2}(\sigma) \sqcup P_3(\sigma) \sqcup P_4(\sigma) \vdash [n]
    \] in the same manner as in \cref{subsec:tropical-intersection}; write $P_i = P_i(\sigma)$ for all $i$. 
    There are at most $3$ top-dimensional cones $\sigma$ containing $\tau$, corresponding to tropical curves with combinatorial types shown in Figure \ref{fig:top-dim-combo-types}. Any such top-dimensional cone $\sigma$ contains $\tau$ in $\Mwtrop$ if and only if the primitive generator $u_{\sigma /\tau}$ correspond to a $w$-stable tropical curve. Equivalently, the top-dimensional cone $\sigma$ contains $\tau$, if and only if the primitive generator $u_{\sigma/\tau} = v_{S_{\sigma}} = v_{[n] \smallsetminus S_{\sigma}}$ exists in $\Mwtrop$, if and only if 
    \[
    \sum\limits_{i \in S_{\sigma}} w_i \ge 1, \text{ and } \sum\limits_{i \in [n] \smallsetminus S_{\sigma}} w_i \ge 1,
    \]
    where $S_{\sigma} \in \{P_1 \cup P_2, P_1 \cup P_3, P_1 \cup P_4\}$. Furthermore, for each $u_{\sigma/\tau}$ present in $\Mwtrop$, the unique positive representation is given by \cref{def:positive-rep}. 
    Therefore, we have that 
    \begin{align}
        \omega_{f_{N,w}}(\tau) &= \sum\limits_{\substack{\sigma \supsetneq \tau \\ \dim \sigma = \dim \tau + 1}} f_{N,w}(\omega(\sigma) u_{\sigma / \tau}) - f_{N,w} \bigg(\sum\limits_{\substack{\sigma \supsetneq \tau \\ \dim \sigma = \dim \tau + 1}} \omega(\sigma) u_{\sigma / \tau}\bigg) \\
        &= \sum\limits_{\substack{\sigma \supsetneq \tau \\ \dim \sigma = \dim \tau + 1}} \omega(\sigma) f_{N,w}\bigg(\sum_{v_S \in V_{N, w}} c_{\sigma, S} v_S\bigg) - f_{N,w} \bigg(\sum\limits_{\substack{\sigma \supsetneq \tau \\ \dim \sigma = \dim \tau + 1}} \omega(\sigma) \bigg(\sum_{v_S \in V_{N, w}} c_{\sigma, S} v_S\bigg)\bigg). 
    \end{align}
    Switching the order of summation and evaluating $f_{N,w}(v_S) = 1$, we have that 
    \begin{align}
        \omega_{f_{N,w}}(\tau) = \sum_{v_S \in V_{N, w}} \sum\limits_{\substack{\sigma \supsetneq \tau \\ \dim \sigma = \dim \tau + 1}}
        \omega(\sigma) c_{\sigma, S} - f_{N,w} \left( \sum_{v_S \in V_{N, w}} \sum\limits_{\substack{\sigma \supsetneq \tau \\ \dim \sigma = \dim \tau + 1}} \omega(\sigma) \left(c_{\sigma, S} v_S\right)\right).
    \end{align}
    We note that $\sum_{v_S\in V_{N,w}} v_S=0$ by a similar argument as in \cite[Lemma 2.4]{Kerber_Markwig_2009}. Therefore, we may obtain the unique positive representation of the argument of $f_{N,w}$ in the second term above, by subtracting $M_{\tau}\sum_{v_S\in V_{N,w}} v_S$, where
    \[
    M_{\tau} \coloneqq \min\limits_{v_S \in V_{N, w}} \bigg(\sum\limits_{\substack{\sigma \supsetneq \tau \\ \dim \sigma = \dim \tau + 1}} \omega(\sigma) c_{\sigma, S}\bigg). 
    \] 
    Therefore, the weight of $f_{N,w}$ on $\tau$ becomes
    \begin{align}
        \omega_{f_{N,w}}(\tau) =  \sum_{v_S \in V_{N, w}} M_{\tau} = K(N, w) M_{\tau}
    \end{align} by linearity of $f_{N,w}$.
    Then Lemma \ref{lem:min-is-binary} and \ref{lem:primitive-vectors-number} gives the desired result. 
    
    \begin{figure}[h!]
        \centering
        \includegraphics{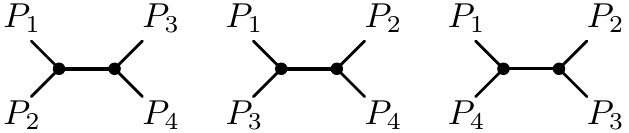}
        \caption{The combinatorial types of the $3$ top-dimensional cones sharing a common codimension-$1$ cone with combinatorial type in Figure \ref{fig:combo-type-codim-1}.}
        \label{fig:top-dim-combo-types}
    \end{figure}
\end{proof}

The following corollary holds immediately from the proof of \cref{thm:divisor-of-rat-multiple-of-psi}.

\begin{corollary}
\label{cor:weight-tau}
    In the situation of \cref{thm:divisor-of-rat-multiple-of-psi}, let $Z$ be an arbitrary $d$-cycle in $\Mwtrop$ for $d \le n-3$ and let $\tau$ be a codimension-1 cone in $Z$. The weight of $\tau$ in the intersection product of $Z$ and $\psi_{N,w}$ is
    \begin{equation}
    \label{eq:weight-tau}
        \omega(\tau)=\mathrm{min}_{v_S\in V_{N,w}}\bigg(\sum_{\substack{\sigma\supsetneq\tau\\\mathrm{dim}(\sigma) =\mathrm{dim}(\tau)+1\\A_{\sigma/\tau}\supset S}} \omega(\sigma)\bigg),
    \end{equation}
    where $A_{\sigma/\tau}\subset [n]\smallsetminus\{N\}$ is the unique set, such that the primitive generator $u_{\sigma/\tau}=v_{A_{\sigma/\tau}}$. 
\end{corollary}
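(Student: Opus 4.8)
The plan is to recognise that the asserted identity is precisely the weight computation already carried out in the proof of \cref{thm:divisor-of-rat-multiple-of-psi}, run once more but now allowing the top-dimensional cones of the ambient object to carry the arbitrary weights $\omega(\sigma)$ of the cycle $Z$ in place of the unit weights of $\Mwtrop$. First I would rewrite the intersection product. By \cref{thm:divisor-of-rat-multiple-of-psi} we have $\psi_{N,w} = \tfrac{1}{K(N,w)}\div(f_{N,w})$, and by the definition of the tropical intersection product recalled in \cref{subsec:tropical-intersection} the product of a cycle with a Weil divisor is the Weil divisor of the restricted rational function. Since $f_{N,w}$ is piecewise linear on all of $\Mwtrop$, it restricts to $Z$, and therefore
\[
Z \cdot \psi_{N,w} = \frac{1}{K(N,w)}\,\div\!\left(f_{N,w}|_{Z}\right).
\]
Thus it suffices to compute, for a fixed codimension-$1$ cone $\tau$ of $Z$, the weight $\omega_{f_{N,w}|_Z}(\tau)$ and divide by $K(N,w)$.

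Next I would replay the computation from the proof of \cref{thm:divisor-of-rat-multiple-of-psi} verbatim. The Weil divisor formula expresses $\omega_{f_{N,w}|_Z}(\tau)$ as a difference of two evaluations of $f_{N,w}$ on sums of $\omega(\sigma)\,u_{\sigma/\tau}$, where now $\omega(\sigma)$ are the weights of the maximal cones of $Z$ containing $\tau$. Writing each primitive generator in the form $u_{\sigma/\tau} = v_{A_{\sigma/\tau}}$ with its unique positive representation $\sum_{v_S \in V_{N,w}} c_{\sigma,S}\,v_S$ from \cref{def:positive-rep}, using linearity of $f_{N,w}$ together with $f_{N,w}(v_S)=1$, and then absorbing the additive ambiguity coming from $\sum_{v_S \in V_{N,w}} v_S = 0$ exactly as before, the same algebra yields
\[
\omega_{f_{N,w}|_Z}(\tau) = K(N,w)\,\min_{v_S \in V_{N,w}}\Bigg(\sum_{\substack{\sigma \supsetneq \tau \\ \dim\sigma = \dim\tau + 1}} \omega(\sigma)\, c_{\sigma,S}\Bigg).
\]
I would emphasise that this step is purely linear in the cone weights: nowhere in the derivation of \cref{thm:divisor-of-rat-multiple-of-psi} was it used that the weights equal $1$, and only the final evaluation of the constant via \cref{lem:primitive-vectors-number} was specific to $\Mwtrop$. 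Dividing by $K(N,w)$ therefore gives $\omega(\tau) = \min_{v_S}\bigl(\sum_{\sigma} \omega(\sigma)\, c_{\sigma,S}\bigr)$.

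Finally I would eliminate the coefficients $c_{\sigma,S}$ using \cref{lem:min-is-binary}, which identifies $c_{\sigma,S} \in \{0,1\}$ with value $1$ precisely when $S \subseteq A_{\sigma/\tau}$. This converts the inner sum $\sum_{\sigma} \omega(\sigma)\, c_{\sigma,S}$ into the restricted sum $\sum_{\sigma:\,A_{\sigma/\tau}\supset S}\omega(\sigma)$, producing exactly the asserted formula. The only point requiring care is bookkeeping: one must check that restricting $f_{N,w}$ to $Z$ leaves the positive representations unchanged — they depend only on the combinatorial type of $u_{\sigma/\tau}$, i.e.\ on $A_{\sigma/\tau}$, and not on $Z$ — and that the minimising index is attained, both of which are inherited unchanged from the proof of \cref{thm:divisor-of-rat-multiple-of-psi}. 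I do not expect any genuine obstacle here; the entire content is that the Theorem's weight computation is linear in, and hence insensitive to, the choice of unit weights, so the passage to an arbitrary cycle $Z$ is immediate.
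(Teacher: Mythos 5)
Your proposal is correct and takes essentially the same route as the paper: the paper's proof of \cref{cor:weight-tau} is precisely the observation that the weight computation in the proof of \cref{thm:divisor-of-rat-multiple-of-psi} is linear in the cone weights $\omega(\sigma)$ and never uses that they equal $1$, so it applies verbatim to $\div(f_{N,w}|_Z)$ and, after dividing by $K(N,w)$, yields the stated minimum formula. The only quibble is attributional: the fact that $c_{\sigma,S}\in\{0,1\}$ with $c_{\sigma,S}=1$ exactly when $S\subseteq A_{\sigma/\tau}$ is not the statement of \cref{lem:min-is-binary} but a fact recorded inside its proof, so you should cite that observation rather than the lemma itself.
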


We now prove the two lemmata used in the proof of \cref{thm:divisor-of-rat-multiple-of-psi}.
\begin{lemma}
\label{lem:min-is-binary}
    In the situation of \cref{thm:divisor-of-rat-multiple-of-psi}, for each codimension-$1$ cone $\tau$ in $X$, and $N \in [n]$, the quantity
    \[
    M_{\tau} := \min\limits_{v_S \in V_{N, w}} \bigg( \sum\limits_{\substack{\sigma \supsetneq \tau \\ \dim \sigma = \dim \tau + 1}} \omega(\sigma) c_{\sigma, S} \bigg)= \begin{cases}
    1 & \text{if } \tau \in \psi_{N, w}; \\
    0 & \text{otherwise}.
    \end{cases}
    \]
\end{lemma}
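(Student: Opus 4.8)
The plan is to evaluate $M_\tau$ directly from the positive representations of the primitive generators that already appear in the proof of \cref{thm:divisor-of-rat-multiple-of-psi}. Since $X=\Mwtrop$ carries weight one on every maximal cone, $M_\tau=\min_{v_S\in V_{N,w}}\sum_\sigma c_{\sigma,S}$, where $\sigma$ runs over the at most three top-dimensional cones containing $\tau$ and the $c_{\sigma,S}$ are the coefficients of \cref{def:positive-rep}. First I would pin these coefficients down explicitly. Writing $u_{\sigma/\tau}=v_{A_\sigma}$ with $A_\sigma$ the side of the corresponding split not containing $N$, the cut-vector identity $v_A=\sum_{\{a,b\}\subseteq A}v_{\{a,b\}}$ holds in $\mathbb{R}_w$ (analogous to \cite{Kerber_Markwig_2009}); moreover $v_{\{a,b\}}=0$ in $\mathbb{R}_w$ whenever $a,b$ are both light, because $v_{\{a,b\}}\in\Im(\phi_w)$ (take the potential equal to $1$ on $a,b$ and $0$ elsewhere). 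Dropping these vanishing terms shows the positive representation of $u_{\sigma/\tau}$ is exactly $\sum_{v_S\in V_{N,w},\,S\subseteq A_\sigma}v_S$, so $c_{\sigma,S}=1$ if $S\subseteq A_\sigma$ and $0$ otherwise. Hence $M_\tau=\min_{v_S\in V_{N,w}}\#\{\sigma\colon A_\sigma\supseteq S\}$, a purely combinatorial count.

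Next I would organise the combinatorics around the unique vertex $v$ of $\G_\tau$ with $\val(v)+|m_\tau^{-1}(v)|=4$ and its four branches $P_1\sqcup P_2\sqcup P_3\sqcup P_4\vdash[n]$, relabelled so that $N\in P_4$. The three candidate top-dimensional cones over $\tau$ are the three resolutions of $v$, and their $N$-free sides are $A=P_1\cup P_2$, $P_1\cup P_3$, $P_2\cup P_3$, which pairwise intersect in $P_1,P_2,P_3$ and have empty common intersection. Two facts drive everything: a resolution is present in $\Mwtrop$ if and only if neither vertex it creates carries two light marks, i.e. iff neither grouped branch-pair consists of two singleton light marks; and $v_S\in V_{N,w}$ if and only if $N\notin S$ and $S$ is not a pair of light marks. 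I would also record the stability bound that $v$ carries at most two light marks, since $\val(v)+\sum_{i\in m_\tau^{-1}(v)}w_i>2$ forces $j(1-\varepsilon)<2$ for the number $j$ of light marks at $v$.

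Then I would run the case analysis dictated by \cref{thm:psi-class-characterisation}. If $N\notin m_\tau^{-1}(v)$ then $|P_4|\ge2$; choosing a valid pair $S$ with one endpoint in $P_4\smallsetminus\{N\}$ (possible since the $\ge2$ heavy marks let us keep a heavy endpoint) gives $S\not\subseteq A_\sigma$ for every $\sigma$, so the count is $0$. If $N\in m_\tau^{-1}(v)$ is light and $v$ carries a second light mark, the resolution separating the two light branches is absent, and a valid pair straddling its $N$-free side is covered by no present resolution, again giving count $0$. In the remaining case $\tau\in\psi_{N,w}$, so $P_4=\{N\}$ and, when $N$ is light, $N$ is the only light mark at $v$; here I would show every valid $S$ has count $\ge1$, because a count-$0$ valid $S$ would have to straddle the $N$-free side of an absent resolution, and such a resolution groups two light branches whose only straddling pair is itself two light marks (excluded from $V_{N,w}$), while a pair meeting $P_4=\{N\}$ would contain $N$. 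Exhibiting a count-$1$ pair from a heavy mark lying off $v$ together with a mark in a second branch among $P_1,P_2,P_3$ then yields $M_\tau=1$.

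The main obstacle is precisely the interlocking of two $w$-stability phenomena absent from the unweighted setting of \cite{Kerber_Markwig_2009}: some of the three resolutions over $\tau$ fail to exist, and simultaneously $V_{N,w}$ omits the two-light-mark rays. The crux is that these two failures are perfectly matched — a resolution disappears exactly when it would group two light marks, and the pair that could exploit its disappearance is then itself a forbidden two-light pair — so the count can never dip to $0$ on a $\psi$-cone. Making this rigorous rests on the clean positive-representation formula above (hence on $v_{\{a,b\}}=0$ in $\mathbb{R}_w$ for light $a,b$) together with the bound that at most two light marks sit at $v$; with these in hand the three cases are routine.
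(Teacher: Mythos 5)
Your proposal follows the same basic route as the paper's proof---reduce $M_\tau$ to the purely combinatorial count $\min_{v_S\in V_{N,w}}\#\{\sigma\supsetneq\tau : S\subseteq S_{\sigma/\tau}\}$ via the positive representations of \cref{def:positive-rep}, then run a case analysis around the $4$-valent vertex---but your version is substantially more careful, and the extra care is exactly where it matters. The paper only \emph{asserts} that $c_{\sigma,S}=1$ if $S\subseteq S_{\sigma/\tau}$ and $0$ otherwise; you actually derive it (cut-vector identity plus the vanishing of two-light-mark vectors in $\R_w$, both of which check out). More importantly, the paper's Case~1 treats every $\tau$ whose $4$-valent vertex carries the mark $N$ uniformly: it exhibits a pair $\{b,c\}$ with $b\in B$, $c\in C$ and claims its count is $1$, which tacitly assumes that the resolution whose $N$-free side is $B\cup C$ is present in $\Mwtrop$. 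When $N$ is light and shares the vertex with a second light mark, that resolution is absent, the count of such a pair is $0$, and indeed $\tau\notin\psi_{N,w}$ by condition (2)(b) of \cref{thm:psi-class-characterisation}; in this subcase the paper's argument outputs $1$ where the statement requires $0$. Your second case, together with your ``matched failures'' observation in the $\psi$-cone case (a resolution over $\tau$ is absent exactly when its $N$-free side is a pair of light marks, and that pair is itself excluded from $V_{N,w}$), supplies precisely the missing analysis; the paper's proof contains nothing corresponding to it.

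Two points in your outline need patching, one small and one structural. First, your characterisation of $V_{N,w}$ is incomplete when $n-m=2$: the all-heavy pair $\{1,2\}$ is also excluded (its complement is all light, so $v_{\{1,2\}}$ is not a ray of $\Mwtrop$), so your formula for the positive representation additionally requires $v_{\{1,2\}}\equiv 0$ in $\R_w$; this is true (use the potential equal to $1$ on all light indices, noting $v_{\{1,2\}}=v_{[n]\smallsetminus\{1,2\}}$), but it must be said. Second, your case-$2$ argument needs the valid straddling pair to \emph{exist}; this holds because every branch has total weight $>1$ and hence contains a heavy mark---except in the single degenerate situation $n=4$, $n-m=2$, $N$ light, where the only straddling pair is the forbidden $\{1,2\}$. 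There no argument can succeed, because the lemma itself fails: $\Mwtrop$ then has exactly two rays, each of which admits a name of size $2$ avoiding $N$ and so lies in $V_{N,w}$, forcing $M_\tau=1$, while $\psi_{N,w}=\varnothing$ by \cref{exmp:losev-manin} (this also contradicts the remark after \cref{thm:divisor-of-rat-multiple-of-psi}, since then $\div(f_{N,w})=2\cdot\{0\}\neq 0$). This edge case defeats the paper's proof equally, so it is a defect of the statement rather than of your approach; away from it, your outline is correct and in fact repairs the paper's own argument.
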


\begin{proof}
    Since $\tau$ has codimension $1$ in $X$, it parametrises combinatorial types with a unique vertex $v$ such that $\val(v) + m^{-1}(v) = 4$. We obtain a partition $A \sqcup B \sqcup C \sqcup D \vdash [n]$ in the manner of \cref{subsec:tropical-intersection} and without loss of generality assume that $N \in A$. There are two cases:
    
    \un{Case 1}: $\{N\} = A$. For any $v_S \in V_{N, w}$, we have that $S$ is contained in one of ${\{B \cup C, B \cup D, C \cup D\}}$. Furthermore, by $w$-stability and without loss of generality, there exists $b \in B$ and $c \in C$ such that $v_{\{b, c\}} \in V_{N, w}$. Consider all $\sigma \supsetneq \tau$ with $\dim \sigma = \dim \tau + 1$; the number of times $v_{\{b, c\}}$ appears in $S_{\sigma/\tau}$ is $1$ where $S_{\sigma/\tau} \subseteq [n] \smallsetminus \{N\}$ such that the primitive generator $u_{\sigma/\tau} = v_{S_{\sigma/\tau}}$. Thus $\{b, c\}$ is a witness of the minimum value of 
    \[
    \sum\limits_{\substack{\sigma \supsetneq \tau \\ \dim \sigma = \dim \tau + 1}} \omega(\sigma) c_{\sigma, S} = 1. 
    \]
    
    \un{Case 2}: $\{N\} \subsetneq A$. Then there exists at least one other element, denoted by $j$ in $A$. For any $v_S \in V_{i, w}$ such that $j \in S$, we have that $S \cap A \ne \varnothing$, and thus $S$ is contained in one of $\{B \cup C, B \cup D, C \cup D\}$. The set $v_S$ is a witness of the minimum value of  
    \[
    \sum\limits_{\substack{\sigma \supsetneq \tau \\ \dim \sigma = \dim \tau + 1}} \omega(\sigma) c_{\sigma, S} = 0,
    \]
    giving $M_{\tau} = 0$ in this case. 
    In both cases, we used the fact that for any $\sigma$ satisfying $\sigma \supsetneq \tau$, $\dim \sigma = \dim \tau + 1$, $c_{\sigma, S} = 1$ if ${S \subseteq S_{\sigma/\tau}}$ and $0$ otherwise. Here, $S_{\sigma/\tau}$ is the unique subset of $[n]\smallsetminus \{N\}$ such that ${u_{\sigma/\tau} = v_{S_{\sigma/\tau}}}$. 
\end{proof}

\begin{lemma}
\label{lem:primitive-vectors-number}
    For $n \ge 4$, $n - m \ge 2$ and $0< \varepsilon < 1/m$, $w=(1^{(n-m)},\varepsilon^{(m)})$, we have that  
    \begin{enumerate}
        \item if $n - m =2$, i.e. the weight vector $w$ contains exactly $2$ heavy weights, then 
        \[
        K(N, w) = \begin{cases} 
        m & N \in [2],\\
        2m - 2 & \text{otherwise}; 
        \end{cases}
        \]
        \item if $n - m > 2$, then 
        \[
        K(N, w) = \begin{cases} 
        \binom{n - 1}{2} - \binom{m}{2} & \text{if } N \in [n-m],\\
        \binom{n-1}{2} - \binom{m-1}{2}  & \text{otherwise}. 
        \end{cases}
        \]
    \end{enumerate}
\end{lemma}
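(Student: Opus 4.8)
The plan is to compute $K(N,w) = |V_{N,w}|$ directly, by counting the two-element subsets $I \subseteq [n]$ with $N \notin I$ for which $v_I$ is an honest primitive generator (ray) of $\Mwtrop$. The crucial observation is that $v_I$ exists in $\Mwtrop$ if and only if the associated one-edge tropical curve is $w$-stable at \emph{both} of its vertices: the vertex supporting $I$ and the vertex supporting $I^c$. For $|I| = 2$, writing $I = \{i, j\}$, stability at the $I$-vertex reads $1 + w_i + w_j > 2$, i.e. $w_i + w_j > 1$, while stability at the $I^c$-vertex reads $1 + \sum_{k \in I^c} w_k > 2$, i.e. $\sum_{k \in I^c} w_k > 1$.

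First I would translate these two inequalities into combinatorial conditions using $w = (1^{(n-m)}, \varepsilon^{(m)})$ with $0 < \varepsilon < 1/m$. Stability at the $I$-vertex fails exactly when both $i$ and $j$ are light, since then $w_i+w_j = 2\varepsilon < 2/m \le 1$ (note that $m \ge 2$ whenever two light indices exist). Stability at the $I^c$-vertex fails only when $I^c$ consists entirely of light marks; as $|I^c| = n - 2$ and there are $n - m$ heavy marks, this occurs precisely when $I$ is the pair of the two heavy marks and $n - m = 2$. In every other configuration both vertices are stable, where one uses $n \ge 4$ so that $|I^c| = n-2 \ge 2$ and a complement containing a heavy mark has weight strictly exceeding $1$. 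Thus $v_I$ fails to exist exactly for the both-light pairs (always) and for the unique both-heavy pair $\{1,2\}$ in the case $n-m=2$.

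With this dichotomy in hand the count is immediate. The number of two-element subsets of $[n]$ avoiding $N$ that are not both-light is $\binom{n-1}{2} - \binom{m}{2}$ when $N$ is heavy (all $\binom{m}{2}$ both-light pairs already avoid $N$) and $\binom{n-1}{2} - \binom{m-1}{2}$ when $N$ is light (only $\binom{m-1}{2}$ both-light pairs avoid $N$). When $n-m > 2$ there is no further exclusion, which yields part (ii). When $n-m=2$ I must additionally remove the both-heavy pair $\{1,2\}$ whenever it avoids $N$: if $N$ is heavy then $\{1,2\}$ contains $N$ and no correction is needed, giving $\binom{n-1}{2} - \binom{m}{2} = m$; if $N$ is light then $\{1,2\}$ is subtracted, giving $\binom{n-1}{2} - \binom{m-1}{2} - 1 = 2m-2$, which is part (i). Both simplifications follow from substituting $n = m+2$ and a one-line binomial computation.

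The main obstacle I anticipate is not the counting but the complement-vertex stability check: it is tempting to test only the $I$-vertex and conclude $K(N,w) = \binom{n-1}{2} - \binom{m}{2}$ (resp. $\binom{n-1}{2} - \binom{m-1}{2}$) uniformly, which is precisely why the $n-m=2$ case must be separated. I would therefore isolate the existence criterion for $v_I$ as an explicit sublemma and verify carefully that the both-heavy pair is the \emph{only} pair whose complement is entirely light, so that no further corrections can arise.
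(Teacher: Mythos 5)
Your proposal is correct and takes essentially the same approach as the paper: both count the $2$-subsets $S$ avoiding $N$ for which the one-edge tropical curve is $w$-stable at both vertices, with the both-light pairs excluded always and the both-heavy pair excluded exactly when $n-m=2$. The only difference is organizational --- the paper enumerates the valid pairs directly in case (i) whereas you subtract the invalid ones uniformly and then simplify the binomials; your explicit isolation of the complement-vertex stability condition (the sole source of the $n-m=2$ correction) is, if anything, a slightly more careful write-up of the same argument.
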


\begin{proof}
    Fix $N \in [n]$. 
    Recall that $K(N, w)$ is the number of subsets $S \subseteq [n]$ such that $|S| = 2$, $N \notin S$, and the tropical curve with only one bounded edge supporting $S$ on one endpoint is $w$-stable, i.e.
    \[
    \sum\limits_{N \in S} w_i \ge 1, \text{ and } \sum\limits_{N \in [n] \smallsetminus S} w_i \ge 1. 
    \] 
    For (i), suppose $N \in [2]$. Any $S \in V_{N, w}$ is precisely of the form $([2] \smallsetminus N) \cup j$ for a light weight index $j \in [n]\smallsetminus [2]$. Thus $K(N, w)$ is the number of ways of choosing a light weight index, and is $m$. Now suppose $N \in [n]\smallsetminus [2]$;  then any $S \in V_{N, w}$ is of the form $j \cup k$ for $j \in [2]$ and $k \in [n] \smallsetminus (N \cup [2])$, giving the count $K(N, w) = 2(m-1) = 2m - 2$. 
    
    For (ii), any $2$-subsets of $[n]$ not containing $N$ can be $V_{N, w}$ except for those contained in ${[n] \smallsetminus [n-m]}$. In the case when $N \in [n-m]$, this excludes $\binom{m}{2}$ subsets of $[n]\smallsetminus [n-m]$; when $N \in [n]\smallsetminus [n-m]$, this excludes the $\binom{m}{2}$ contained in $[n]\smallsetminus [n-m]$ but not containing $N$. 
    \end{proof}

\section{Tropical local multiplicities and intersection numbers}
\label{sec:GW-invariants}
In this section, we prove \cref{thm:main-later}, and we first prepare by introducing the notion of tropical local multiplicity at each vertex of a tropical curve. 
\begin{definition}
    Let $w$ be as in \cref{conv:weight-vector} and a set $S \subseteq [n]$, a partition $P = P_1 \sqcup \cdots \sqcup P_r$ of $S$ is \textbf{totally $w$-unstable} if 
    \[
    \sum_{i \in P_j} w_i \le 1, 
    \] for all $j \in [r]$. 
\end{definition} 
We denote the set of all totally $w$-unstable partition of $S$ by $\calP_w(S)$. 

We will use the following definitions involving set partitions. 
\begin{definition}
\label{def:set-admissible-partitions}
    Given a partition 
    \[
    P = P_1 \sqcup \cdots \sqcup P_r \vdash M
    \] of a set $M$ and a subset $S \subseteq M$, 
    the partition $P$ is called \textbf{$S$-admissible} if there exists a subset $I \subseteq [r]$ such that $\sqcup_{i \in I} P_i = S$.  
\end{definition}
As an example, consider the partition $P = \{1, 23, 4\} \vdash [4]$ and the subset $S_1 = 12$ and $S_2 = 234$. Then $P$ is $S_2$-admissible but not $S_1$-admissible. 

\begin{definition}
\label{def:P-sequence-of-K}
    Given a sequence of numbers $K = \{k_i\}_{i \in \mathbb{Z}_{> 0}}$ and a partition 
    $P$ of a subset of $\mathbb{Z}_{> 0}$, 
    the \textbf{$P$-sequence of $K$} is 
    \[
    K(P) := \bigg\{1 - |P_i| + \sum_{j \in P_i} k_j\bigg\}_{i \in \mathbb{Z}_{> 0}}.
    \]
\end{definition}

Next, we define the tropical local multiplicities at the vertices of abstract tropical curves.

\begin{definition}
\label{def:gw-mult}
 \normalfont Let $\sigma$ be a cone in $\Mwtrop$ with combinatorial type $\G_{\sigma} = (G_{\sigma}, m_{\sigma})$ and let $v \in V(G_{\sigma})$. We define the \textbf{tropical local multiplicity at the vertex $v$ of $\sigma$} as
        \[
        \TLM_{\sigma}(v) := \sum_{P \in \calP_{w}(m_{\sigma}^{-1}(v))} (-1)^{|m_{\sigma}^{-1}(v)| - \ell(P)} \binom{\sum_{i} K(P)_i}{K(P)_1\cdots K(P)_{\ell(P)}}. 
        \]
\end{definition}
\begin{remark}
\label{rmk:gw-combo-only}
    Note that the tropical local multiplicity at a vertex or the product of all the tropical local multiplicities at all vertices of a given tropical curve completely depends on the combinatorial type of the tropical curve. Importantly, \cref{lem:nonnegative} implies that it is always nonnegative. 
\end{remark}

We are now ready to formulate \cref{thm:main-later} precisely.
\begin{repthm}{thm:main-later}
    Let $w \in (\Q \cap (0, 1])^n$ be heavy/light, and $K = (k_1, \ldots, k_n) \in (\mathbb{Z}_{\ge 0})^n$. The intersection product $\prod\limits_{i = 1}^{n} \psi_{i, w}^{k_i}$ is the weighted subfan of $\Mwtrop$ consisting of closures of the cones of codimension $\sum_i k_i$ satisfying the following conditions:
    \begin{enumerate}
        \item For each maximal cone $\sigma$ in $\prod\limits_{i = 1}^{n} \psi_{i, w}^{k_i}$ with combinatorial type $\G_{\sigma} = (G_{\sigma}, m_{\sigma})$, and for each vertex $v \in V(G_{\sigma})$, we have
        \[
        \val(v) + |m_{\sigma}^{-1}(v)| = 3 + \sum_{i \in m_{\sigma}^{-1}(v)} k_i. 
        \]
        \item The weight of a maximal cone $\sigma$ in $\prod\limits_{i = 1}^{n} \psi_{i, w}^{k_i}$ with combinatorial type $\G_{\sigma} =(G_{\sigma}, m_{\sigma})$ is the product of the tropical local multiplicities at all vertices of $G_{\sigma}$, i.e.
        \begin{align}
        \label{eq:tau-weight}
        \omega(\sigma) &= \prod_{v \in V(G_{\sigma})} \TLM_{\sigma}(v)\\
        &= \prod_{v \in V(G_{\sigma})} \sum_{P \in \calP_{w}(v)} (-1)^{|m_{\sigma}^{-1}(v)| - \ell(P)} \binom{\sum_{i} K(P)_i}{K(P)_1\cdots K(P)_{\ell(P)}}.
    \end{align}
    \end{enumerate}
\end{repthm}

\begin{proof}[Proof of Theorem \ref{thm:main-later}]
    We prove by induction on the number of intersecting weighted tropical $\psi$-classes, or equivalently, $\sum_i k_i$.
    To proceed, we set the following notations for convenience.
    \begin{enumerate}
        \item If $\calP(M)$ is a set of partitions of a set $M$, we denote the set of $S$-admissible partitions by $\calP^{S}(M)$.  
        \item Given any partition $P = P_1 \sqcup \cdots \sqcup P_r$ of $[n]$ and $I \subseteq [r]$, we set $\supp(\{P_i\}_{i \in I}) := \cup_{i \in I} P_i$, i.e. the support of some parts is the union of those parts. 
        \item For any cone $\sigma$, write the combinatorial type of $\sigma$ as $\G_{\sigma} = (G_{\sigma}, m_{\sigma})$. Write $V(G_{\sigma})$ and $E(G_{\sigma})$ as the vertex and the edge set of $G_{\sigma}$. 
        \item In the inductive step, denote $\I =\prod \psi_{i}^{k_i}$ and $\I_N = \psi_{N} \prod \psi_{i}^{k_i}$ for $N \in [n]$.
        \item Given a codimension-$1$ cone $\tau$ in $\I$ and $v_{T} \in V_{N, w}$, define  
        \[
        \calS(\tau, v_T) := \{\sigma: \sigma \supsetneq \tau, \sigma \in \I, \text{ and } T\subset S_{\sigma/\tau}\},
        \]
        where for each $\sigma$, $S_{\sigma/\tau}$ is the unique set $[n] \smallsetminus \{N\}$ such that $u_{\sigma/\tau} = v_{S_{\sigma/\tau}}$ as usual. 
        \item Given a codimension-$1$ cone $\tau$ in $\I$ and $v_T \in V_{N, w}$, let 
        \[
        \Sigma(\tau, v_T) := \sum_{\sigma \in \calS(\tau, V_{T})} \omega(\sigma).
        \]
    \end{enumerate}
    The base case is when $\sum_{i} k_i = 1$; there is $i \in [n]$ such that $k_i = 1$, $k_j = 0$ for all $j \ne i$ and then the combinatorial description of a weighted tropical $\psi$-class in \cref{thm:psi-class-characterisation} implies the desired result. For the inductive step, we assume that \cref{thm:main-later} holds for $\I$. We compute $\I_N$ for $N \in [n]$ and there are two parts to prove. 
    
    For part (i), suppose $\tau$ is a codimension-$1$ cone in $\I$ and thus $\G_{\tau}$ is the edge-contraction of an edge $e \in E(G(\sigma))$ for some maximal dimensional cone $\sigma$ in $\I$. By the induction hypothesis, the vertex $v$ in $G_{\tau}$ as the result of the edge-contraction of $e$ satisfies
    \begin{align}
        \val(v) + |m_{\tau}^{-1}(v)| &= (\val(v_1) + |m_{\sigma}^{-1}(v_1)| - 1) + (\val(v_2) + |m_{\sigma}^{-1}(v_2)| - 1) \\
        &= \bigg(3 - 1 + \sum_{i \in m_{\sigma}^{-1}(v_1)} k_i \bigg) + \bigg(3 - 1 + \sum_{i \in m_{\sigma}^{-1}(v_2)} k_i  \bigg) \\
        &= 4 + \sum_{i \in m_{\sigma}^{-1}(v_1) \cup m_{\sigma}^{-1}(v_2)} k_i \\
        &= 4 + \sum_{i \in m_{\tau}^{-1}(v)} k_i .
    \end{align}
    Furthermore, following a similar argument in the proof of \cite[Theorem 4.1]{Kerber_Markwig_2009}, we have that $N \in m^{-1}(v)$. Then the above can be rewritten as 
    \begin{align}
        \val(v) + |m_{\tau}^{-1}(v)| &= 3 + (1 + k_N) + \sum_{\substack{i \in m_{\tau}^{-1}(v), \\ i \ne N}} k_i 
    \end{align} as desired and proving part (i). 
    
    For part (ii), let us restate \cref{thm:main-later} (ii) as follows. Firstly, note that for each $\sigma$ such that $\tau \subsetneq \sigma \in \I$, writing the contracted edge as $e \in E(G_{\sigma})$ with endpoints $v_1$ and $v_2$, we can assume without loss of generality that $N \in m_{\sigma}^{-1}(v_1)$. Furthermore, by \cref{cor:weight-tau} and recalling set notations, 
        \[
        \omega(\tau) = \min_{V_{T} \in V_{N, w}} \bigg(\sum_{\sigma \in \calS(\tau, v_T)} \omega(\sigma)\bigg) = \min_{V_T \in V_{N, w}} \Sigma(\tau, v_T). 
        \] 
    Let $T^{\ast} = \{t_1, t_2\} \subset ([n] \smallsetminus \{N\})$, such that the above minimum is achieved. 
    
    We make two observations about such $T^{\ast}$. Firstly, because $v_{T^{\ast}} \in V_{N, w}$, we may assume $w_{t_1} = 1$ without loss of generality. 
    Secondly, we may assume that $t_1, t_2$ are marks on distinct connected components of $G_{\sigma}\smallsetminus e$. Otherwise, suppose $t_1,t_2$ are marks on the same connected component of $G_{\sigma}\smallsetminus e$ and take $t_2'\neq N$ to be any mark on another connected component, then
    \begin{equation}
        \calS(\tau,v_{T^\ast})\subseteq \calS(\tau,v_{\{t_1,t_2'\}}).
    \end{equation}
    By \cref{lem:nonnegative}, we have $\omega(\sigma) \ge 0$ for any $\sigma \in \calS(\tau, v_S)$ for $V_S \in V_{N, w}$. This implies that 
    \begin{equation}
        \Sigma(\tau,v_{T^\ast})\le\Sigma(\tau,v_{\{t_1,t_2'\}}).
    \end{equation}
    Therefore, we hereafter take $T^{\ast} = \{t_1, t_2\} \subset ([n] \smallsetminus \{N\})$, such that $t_1,t_2$ are contained in distinct connected components of $G_{\sigma} \smallsetminus e$ and $w_{t_1}=1$, and we compute $\omega(\tau) = \Sigma(\tau, v_{T^{\ast}})$.
    Now we want to show that
    \begin{equation}
    \label{eq:rewriting-the-min}
    \Sigma(\tau, v_{T^{\ast}}) = \prod_{v \in V(G_{\tau})} \TLM_{\tau}(v). 
    \end{equation}
    We further simplify the above by making the following observation: recall that $\Sigma(\tau, v_{T^{\ast}}) = \sum_{\sigma \in \calS(\tau, v_{T^{\ast}})} \omega(\sigma)$. By the inductive hypothesis, the weight of each $\sigma$ equals 
    \[
    \omega(\sigma) = \prod_{u \in V(G_{\sigma})} \TLM_{\sigma}(u). 
    \]
    By \cref{rmk:gw-combo-only}, any $\sigma \in \calS(\tau, v_{T^{\ast}})$ satisfies that 
    \begin{equation}
    \prod_{\substack{u \in V(G_{\sigma}), \\u \neq v_1, v_2}} \TLM_{\sigma}(u) = \prod_{\substack{w \in V(G_{\tau}), \\w \ne v}} \TLM_{\tau}(w).
    \end{equation}
    Cancelling out these terms on both sides of \cref{eq:rewriting-the-min}, proving \cref{eq:rewriting-the-min} amounts to showing that  
    \begin{equation}
    \label{eq:two-GWI-become-one-GWI}
    \sum_{\sigma \in \calS(\tau, V_{T^{\ast}})}  \TLM_{\sigma}(v_1) \TLM_{\sigma}(v_2) = \TLM_{\tau}(v). 
    \end{equation}
    
    To proceed, we study the range of the summation above, namely, the set $\calS(\tau, V_{T^{\ast}})$. The cones in $\calS(\tau, V_{T^{\ast}})$ can be described completely combinatorially: for any such $\sigma$, again writing the contracted edge as $e$ with endpoints $v_1$ and $v_2$, we have that 
    \begin{enumerate}
        \item for the markings $m_{\tau}^{-1}(v)=m_\sigma^{-1}(v_1)\sqcup m_\sigma^{-1}(v_2)$; 
        \item the marking $N \in m_{\sigma}^{-1}(v_1)$ by assumption; and 
        \item $T^{\ast}$ is contained the same component of $G_\sigma\smallsetminus e$ as $v_2$. 
    \end{enumerate}
    Therefore, writing $m_{\sigma}^{-1}(v_1)$ as $S$ and $m_{\tau}^{-1}(v)$ as $M$, the left hand side of \cref{eq:two-GWI-become-one-GWI} is 
    \begin{align}
    \label{eq:GWI-product-over-S}
        \sum_{\sigma \in \calS(\tau, V_{T^{\ast}})} \TLM_{\sigma}(v_1) \TLM_{\sigma}(v_2) = \sum_{\substack{N \in S \subseteq M,\\ T^{\ast} \subset M\smallsetminus S}}\, \sum_{\substack{\sigma \in \I, \\ S = m_{\sigma}^{-1}(v_1)}}
        \TLM_{\sigma}(v_1) \TLM_{\sigma}(v_2).
    \end{align}
    
    We now analyse the right hand side of \cref{eq:GWI-product-over-S}.
    For each $S$, the number of maximal cones $\sigma$ satisfying that $S$ is the set of markings supported on $v_1$ is precisely the number of distinct ways of distributing edges adjacent to $v$. The total number of edges adjacent to $v$ is 
    \[
    \val(v) = 4 - |M| + \sum_{i \in M} k_i . 
    \] 
    Furthermore, for any such cone $\sigma$, by our assumption that $t_1, t_2$ are on different components of $G_{\sigma} \smallsetminus e$, there are precisely $2$ edges on the same components with $t_1$ and $t_2$ respectively that are not adjacent to $v_1$. Thus the total number of edges adjacent to $v$ in $G_{\tau}$ that could be adjacent to $v_1$ in $G_{\sigma}$ is  
    \[
    \val(v) - 2 = 4 - 2 - |M| + \sum_{i \in M} k_i = 2 - |M| + \sum_{i \in M} k_i. 
    \]
    The total number of edges adjacent to $v_1$ and existent in $G_{\tau}$ is equal to the valence of $v_1$ subtracting $1$ (disregarding the edge $e$):
    \[
    \val(v_1) - 1  = 3 - 1 - |S| + \sum_{i \in S} k_i = 2 - |S| + \sum_{i \in S}k_i. 
    \]
    Therefore, the total number of maximal cones $\sigma$ in $\I$ such that $S = m^{-1}_{\sigma}(v_1)$ is precisely the binomial coefficient
    \[
    \binom{2 - |M| + \sum_{i \in M} k_i}{2 - |S| + \sum_{i \in S} k_i}. 
    \]
    Moreover, for each such $S$ and $\sigma$, the product of tropical local multiplicities at $v_1$ and $v_2$ is
    \begin{align}
    \label{eq:product-into-two-partitions}
        \left( \sum_{P_1 \in \calP_{w}(S)} (-1)^{|S| - \ell(P_1)} \binom{\sum_{i} K(P_1)_i}{K(P_1)_1,\ldots, K(P_1)_{\ell(P_1)}}\right) \left( \sum_{P_2\in \calP_{w}(S^{c})} (-1)^{|S^{c}| - \ell(P_1)} \binom{\sum_{i} K(P_2)_i}{K(P_2)_1, \ldots, K(P_2)_{\ell(P_2)}}\right).
    \end{align}
     Next, we observe that 
     \[
    \left\{P_1 \cup P_2: P_1 \in \calP_{w}(S), P_2 \in \calP_{w}(S^c), N \in S,t_1 \notin S, t_2 \notin S, S \subseteq M\right\} 
    \] is precisely the $S$-admissble partitions in $\calP_{w}(M)$, such that $t_2$ and $N$ are not contained in the same part of the partition. 
    
    To summarise, the right hand side of \cref{eq:GWI-product-over-S} becomes 
    \begin{align}
    \label{eq:final-split-up}
        \sum_{P,P_1,P_2} (-1)^{\lambda} &\binom{2 - |M| + \sum_{i \in M} k_i}{2 - |\cup_{j \in \ell(P_1)} P_{1, j}| + \sum_{i \in S} k_i}\binom{\sum_{i} K(P_1)_i}{K(P_1)_1, \ldots,  K(P_1)_{\ell(P_1)}}\binom{\sum_{i} K(P_2)_i}{K(P_2)_1,\ldots, K(P_2)_{\ell(P_2)}},
    \end{align}
    where 
    \begin{enumerate}
    \item $P\in\calP_w(M)$ with $t_2,N$ not in same part of $P$; 
    \item $P_1 \cup P_2 = P$ such that $N\in\supp(P_1)$ and $t_1,t_2\not\in\mathrm{Supp}(P_1)$; 
    \item $\lambda=|M|-\ell(P)$.
    \end{enumerate}
    
    We simplify \cref{eq:final-split-up} as follows. Firstly, we note the following equality of the binomial coefficients
    \[
    \binom{2 -|M| + \sum_{i \in M} k_i}{2 - |\supp(P_1)| + \sum_{i \in S} k_i} = \binom{2 -\ell(P)+ \sum_{i \in M} k_i - (|M| - \ell(P)))}{2 -\ell(P) + \sum_{i \in S} k_i - (|\supp(P_1)| - \ell(P_1))}. 
    \]
    Secondly, we observe that 
    \[
    \sum_{i \in M} k_i - (|M| - \ell(P)) = \sum_{P_i \in P} \bigg(\sum_{j \in P_i} k_j\bigg) - (|P_i| - 1)  = \sum_i K(P)_i, 
    \]
    and that 
    \[
    \sum_{\substack{N \in \supp(P_1),\\ t_1, t_2 \notin \supp{P_1}}} k_i - (|\supp(P_1)| - \ell(P_1)) = \sum_{P_{1,i} \in P_1} \bigg(\sum_{j \in P_{1,i}} k_j\bigg) - (|P_{1,i}| - 1) = \sum_{i}K(P_1)_i, 
    \]
    Third, observe that for a fixed $P \in \calP_{w}(M)$, and any pair of $P_1$ and $P_2$ such that $\{P_1\} \cup \{P_2\} = P$, the terms indexed by $(P_1, P_2)$ in the right hand side of \cref{eq:final-split-up} have the same denominator 
    \[
    \prod_{i} K(P_1)_i! \prod_{j}K(P_2)_j! = \prod_{i} K(P)_i! 
    \]
    Therefore, Equation \ref{eq:final-split-up} equals 
    \begin{align}
        \sum_{\substack{P \in \calP_w(M)\\t_2,N\,\textrm{not in}\\
        \textrm{same part of }P}}\frac{ (-1)^{|M| -\ell(P)}}{\prod_{i} K(P)_i!} \sum_{\substack{\{P_1\} \cup \{P_2\} = P\\N\in\supp(P_1)\\t_1,t_2\not\in\mathrm{Supp}(P_1)}} \binom{2 - \ell(P) + \sum_i K(P)_i}{2 - \ell(P_1) + \sum_{j} K(P_1)_j} \bigg(\sum_i K(P_1)_i \bigg)!\bigg(\sum_j K(P_2)_j\bigg)!.
    \end{align}
    By \cite[Equation (3)]{Kerber_Markwig_2009}, for fixed $P$ the second summation becomes
    \[
    \sum_{\{P_1\} \cup \{P_2\} = P} \binom{\sum_i K(P)_i - \ell(P) + 2}{\sum_{j} K(P_1)_j - \ell(P_1) + 2} \bigg(\sum_i K(P)_i \bigg)! \bigg(\sum_j K(P_1)_j \bigg)! = \frac{(\sum_{i} K(P)_i + 1)!}{(K(P^{\ast})_1 + 1)}
    \] where $P^{\ast}$ is the unique part in $P$ such that $N \in P^{\ast}$. 
    Putting everything together, Equation \ref{eq:final-split-up} becomes 
    \begin{align}
    \label{equ:finalresolution}
        \sum_{\substack{P \in \calP_w(M)\\t_2,N\,\textrm{not in}\\
        \textrm{same part of }P}} (-1)^{|M| - \ell(P)} \frac{(\sum_i K(P)_i + 1)!}{(K(P^{\ast})_1 + 1) \prod_{i}K(P)_i!}.
    \end{align} 
    By \cref{lem:not-colliding}, we have that \cref{equ:finalresolution} achieves minimum for such $T^\ast$, where $t_2$ and $N$ can never be in the same part of $P$. For such a choice of $T^\ast$, we obtain that \cref{equ:finalresolution} becomes
    \begin{align}
        \sum_{P \in \calP_w(M)} (-1)^{|M| - \ell(P)} \frac{(\sum_i K(P)_i + 1)!}{(K(P^{\ast})_1 + 1) \prod_{i}K(P)_i!}=\TLM_{\tau}(v)
    \end{align} 
    as desired.
\end{proof}

One immediate corollary is that, when $w = (1^{(n)})$, we recover the result of Kerber-Markwig on $\Mntrop$. In the case of top dimension, this also confirms Katz' expectation that the tropical intersection product of $\psi$-classes coincides with their classical counterparts, computed in \cite[Theorem 7.9]{alexeev_guy_2008}. See \cref{cor-kerbmark} and \cref{cor-tropalg}. 

\appendix
\section{Technical lemmata for \cref{thm:main-later}}
We now prove two lemmata, giving a characterisation of the primitive generators for maximal cones $\sigma$ that minimise the sum appearing in the inductive step in the main theorem. \cref{lem:nonnegative} is a recursive formula of the multinomial coefficient and a concrete application of the inclusion-exclusion principle. \cref{lem:not-colliding} is a direct application of \cref{lem:nonnegative} to yield the desired characterisation. 
\begin{lemma}
\label{lem:nonnegative}
    Let $w$ be as in \cref{conv:weight-vector} and $K = (k_1, \ldots, k_n) \in (\mathbb{Z}_{\ge 0})^n$. Then 
    \begin{equation}
    \label{eq:nonnegative}
        \sum_{P\in \calP_w(S)}(-1)^{|S|-\ell(P)}\binom{\sum_{i} K(P)_i}{K(P)_1, \ldots, K(P)_{\ell(P)}}\ge 0.
    \end{equation}
\end{lemma}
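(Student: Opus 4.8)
The plan is to first exploit the heavy/light structure to collapse the sum to the light marks, and then to establish nonnegativity by an induction whose engine is a splitting recursion for the multinomial coefficient together with inclusion--exclusion.

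First I would record the shape of the totally $w$-unstable partitions. Since $w=(1^{(n-m)},\ep^{(m)})$ with $0<\ep<1/m$, a subset $T\subseteq[n]$ satisfies $\sum_{i\in T}w_i\le 1$ exactly when either $T$ is a single heavy index or $T$ consists only of light indices (as $|T|\ep\le m\ep<1$). Consequently every $P\in\calP_w(S)$ has each heavy index of $S$ as its own block, while the light marks $L:=S\cap([n]\smallsetminus[n-m])$ may be partitioned arbitrarily; the heavy blocks $\{h\}$ contribute the fixed entries $k_h$ to the multinomial and do not affect the sign, since $|S|-\ell(P)=|L|-\ell(P_L)$ for the induced light partition $P_L$. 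This rewrites the left-hand side of \eqref{eq:nonnegative} as a sum over set partitions $P$ of $L$ of $(-1)^{|L|-\ell(P)}$ times a multinomial whose top is $\sum_i K(P)_i=\ell(P)+C$, where $C:=\sum_{i\in S}k_i-|S|$ is constant and the block entries are $K_B=1-|B|+\sum_{i\in B}k_i$. Throughout I would use the convention that a multinomial coefficient with a negative entry is $0$ (forced by the Gamma-function normalisation), which is exactly what keeps the statement meaningful when some $k_i$ are small.

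Next I would run an induction on $|L|$. Fixing a light mark $r\in L$ and splitting the partition sum according to the block $B=\{r\}\cup A$ containing $r$, the splitting identity $\binom{N}{K_B,\dots}=\binom{N}{K_B}\binom{N-K_B}{\dots}$ isolates $B$; a direct check gives $N-K_B=\ell(P')+C'$, the top associated to the induced partition $P'$ of $S\smallsetminus B$, so the inner sum is precisely the inductively nonnegative quantity attached to $S\smallsetminus B$ --- except that the prefactor $\binom{N}{K_B}=\binom{\ell(P')+1+C}{K_B}$ still depends on $P'$ through $\ell(P')$. Decoupling this dependence is the crux. My plan here is to expand $\binom{\ell(P')+1+C}{K_B}$ by a Chu--Vandermonde convolution in the variable $\ell(P')$ (this is the role of the ``recursive formula of the multinomial coefficient'', in the spirit of the identity \cite[Equation (3)]{Kerber_Markwig_2009} already used in the proof of \cref{thm:main-later}), thereby rewriting the whole expression as an honest inclusion--exclusion count. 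The surviving quantity should then be the cardinality of an explicit set of arrangements, consistent with the closed forms one computes in small cases, e.g. $(k,1^{(r)})\mapsto k^{r}$ and the two-light value $\binom{k_1+k_2}{k_1}-1$, whence nonnegativity is immediate.

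The main obstacle is exactly the coupling just described: the top $\ell(P)+C$ of the multinomial grows with the number of blocks, so neither the heavy/light reduction nor a naive block-by-block inclusion--exclusion over a fixed ground set applies directly, and one cannot simply factor the block $B$ out of the remaining sum. Everything hinges on finding the correct summation identity that absorbs the $\ell(P')$-dependence of the prefactor $\binom{\ell(P')+1+C}{K_B}$; once that identity is in hand the induction closes and the sign-cancellation is revealed to be an ordinary inclusion--exclusion. I expect the bookkeeping of this changing top, rather than any single algebraic step, to be the most delicate part of the argument.
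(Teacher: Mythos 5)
Your preliminary reduction is sound: in any totally $w$-unstable partition every heavy index is forced to be a singleton block, the sign satisfies $|S|-\ell(P)=|L|-\ell(P_L)$ for the induced partition $P_L$ of the light marks $L$, and your small-case evaluations (the value $k^{r}$ for light values $(k,1,\dots,1)$ and the two-light value $\binom{k_1+k_2}{k_1}-1$) are correct. Nevertheless the proposal has a genuine gap, and it is exactly the one you flag yourself. After splitting off the block $B$ containing a fixed light mark and factoring $\binom{N}{K_B,\dots}=\binom{N}{K_B}\binom{N-K_B}{\dots}$, the prefactor $\binom{\ell(P')+1+C}{K_B}$ depends on the residual partition $P'$ through $\ell(P')$, so the inner sum is \emph{not} a constant multiple of the inductive quantity attached to $S\smallsetminus B$. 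Your plan to absorb this dependence ``by a Chu--Vandermonde convolution in the variable $\ell(P')$'' is only a hope: expanding $\binom{\ell(P')+1+C}{K_B}=\sum_j\binom{1+C}{K_B-j}\binom{\ell(P')}{j}$ trades the coupling for alternating sums over partitions with $j$ distinguished blocks, and you neither exhibit the identity that would close the induction on $|L|$ nor show that the surviving quantity is a cardinality. Since you state explicitly that everything hinges on this unproven identity, the argument as written does not establish nonnegativity.

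For comparison, the paper resolves precisely this difficulty by choosing a recursion that never creates the coupling. Instead of isolating one block, it applies the decrement recursion $\binom{\sum_i K(P)_i}{K(P)_1,\ldots,K(P)_{\ell(P)}}=\sum_{j=1}^{\ell(P)}\binom{\sum_i K(P)_i-1}{K(P)_1,\ldots,K(P)_j-1,\ldots,K(P)_{\ell(P)}}$, which lowers the top by exactly $1$ in every term simultaneously. A term with the $j$-th entry decremented is then recognised as a term of the \emph{same} lemma for smaller data, in which a nonempty set $I$ of light marks inside $P_j$ is merged into one new light mark $\bullet_I$ with $k_{\bullet_I}=\sum_{i\in I}k_i-|I|$; summing over all $\varnothing\subsetneq I\subset S\smallsetminus[n-m]$ and matching coefficients via $\sum_{i=1}^{p}(-1)^{i+1}\binom{p}{i}=1$ rewrites the whole expression as a sum of such smaller instances, each nonnegative by induction on $n+\sum_i k_i$ (rather than on $|L|$). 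If you wish to salvage your block-splitting route, the missing summation identity you are searching for is essentially this merge-and-decrement reorganisation; without it, your induction does not close.
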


\begin{proof}
    The proof proceeds by induction on $n + \sum_i k_i$. In the base case when $n + \sum_{i} k_i = 1$, we must have that $n = 1$, $k_i = 0$ for all $i$, and thus the equality holds trivially. Assume that the result holds for all $n' \ge 1, k_1',\dots,k_n' \in \mathbb{Z}_{\ge 0}$, such that $n' + \sum k_i'< n + \sum k_i$.  

    To compute the left hand side of (\ref{eq:nonnegative}), for each $P \in \calP_{w}(S)$, we apply the recursive formula for multinomial coefficients:
    \begin{align}
    \binom{\sum_{i} K(P)_i}{K(P)_1, \ldots, K(P)_{\ell(P)}} = \sum_{j=1}^{\ell(P)}\binom{(\sum_{i}K(P)_i) - 1}{K(P)_1, \ldots, K(P)_j-1, \ldots, K(P)_{\ell(P)}}.
    \end{align}
    Then the left hand side of (\ref{eq:nonnegative}) becomes 
    \begin{align}
        &\sum_{P\in \calP_w(S)}(-1)^{|S|-\ell(P)}\binom{\sum_i K(P)_i}{K(P_1), \ldots, K(P)_{\ell}} \\
        &=\sum_{P\in \calP_w(S)}(-1)^{|S|-\ell(P)}\sum_{j=1}^{\ell(P)}\binom{\sum_{i}K(P)_i - 1}{K(P)_1, \ldots, K(P)_j-1, \ldots, K(P)_{\ell(P)}}. 
    \end{align}
   
    To proceed, we prepare new notations as follows. 
    \begin{enumerate}
        \item For $\varnothing \subsetneq I\subset S\smallsetminus[n-m]$, set $S_I := S\smallsetminus I\cup \{\bullet_I\}$. 
        \item Set $k_{\bullet_I} :=\sum_{i\in I}k_i-|I|$ and $K_I$ be the sequence obtained from $K$ by deleting $k_i$ indexed by $I$ and appending $k_{\bullet_I}$. 
        \item Set $w_I :=(1^{(n-m)},\varepsilon^{(m-|I|+1)})$ to be the weight vector obtained from $w$ by deleting the weights indexed by $I$ and appending the weight of the mark $\bullet_I$ as $\varepsilon$. 
    \end{enumerate}
    
    Firstly, since $|S_I| = n - |I| + 1$ and $|I| \ge 1$, we have \[\sum_{i\in S_I}k_i+|S_I|=\sum_{i\in S}k_i-|I|+|S_I|=\sum_{i\in S}k_i+n-2|I|+1<\sum_{i\in S}k_i+n.\] 
    
    Secondly, we have the following \underline{claim}:
    \begin{align}
    \label{equ:inclexcl}
        &\sum_{P\in \calP_w(S)}(-1)^{|S|-\ell(P)}\sum_{j=1}^{\ell(P)}\binom{\sum_{i}K(P)_i}{K(P)_1, \ldots, K(P)_j-1, \ldots, K(P)_{\ell(P)}} \\
        &=\sum_{\emptyset\subsetneq I\subset S\smallsetminus[n-m]}\sum_{P\in\mathcal{P}_{w_I}(S_I)}(-1)^{|S_I|-\ell(P)}\binom{\sum_{j} K_{I}(P)_j}{K_I(P)_1, \ldots, K_I(P)_{\ell(P)}}.
    \end{align}
    To prove the claim, we fix $P\in \mathcal{P}_{w}(S)$, $j\in [\ell(P)]$, and compare the coefficients of the multinomial coefficient 
    \begin{equation}
    \label{equ:binom}
        \binom{\sum_{i} K(P)_i}{K(P)_1, \ldots, K(P)_j - 1, \ldots, K(P)_{\ell(P)}}
    \end{equation} on both sides. 
    Observe that it appears with coefficient $(-1)^{|S|-\ell(P)}$ on left hand side of \cref{equ:inclexcl}. On the right hand side of \cref{equ:inclexcl}, for each nonempty $I \subset P_j$, the partition
    \[
    P_1 \sqcup \cdots \sqcup \bigg(P_{j}\smallsetminus I\cup\{\bullet_I\}\bigg) \sqcup \cdots \sqcup P_{\ell(P)}
    \] is in $\mathcal{P}_{w_I}(S_I)$. Thus, the multinomial coefficient appears on the right hand side of \cref{equ:inclexcl} with coefficient
    \begin{align}
        \sum_{\emptyset\subsetneq I\subset P_{j}}(-1)^{|S_I|-\ell(P)} &=\sum_{\emptyset\subsetneq I\subset P_j}(-1)^{|S|-|I|+1-\ell(P)}\\
        &=(-1)^{|S|-\ell(P)}\sum_{\emptyset\subsetneq I\subset P_{j}}(-1)^{|I|+1} \\
        &=(-1)^{|S|-\ell(P)}\sum_{i=1}^{|P_{j}|}(-1)^{i+1}\binom{|P_{j}|}{i} \\
        &=(-1)^{|S|-\ell(P)}, 
    \end{align} thus proving the claim. The last equality is an application of the inclusion-exclusion principle in terms of binomial coefficients: for any integer $m > 0$,
    \[
    \sum_{i = 1}^{m} (-1)^{i+1} \binom{m}{i}=1. 
    \]
    
    Lastly, by the induction hypothesis, every summand
    \begin{align}
        \sum_{P\in\mathcal{P}_{w_I}(S_I)}(-1)^{|S_I|-\ell(P)}\binom{\sum_j K_I(P)_j}{K_I(P)_1, \ldots, K_{I}(P)_{\ell(P)}},
    \end{align}
    of the right hand side of \cref{equ:inclexcl} is nonnegative, and thus the entire sum is nonnegative. Therefore, we obtain that
    \begin{equation}
        \sum_{P\in \calP_w(S)}(-1)^{|S|-\ell(P)}\binom{\sum_{i} K(P)_i}{K(P)_1, \ldots, K(P)_{\ell(P)}} \ge 0,
    \end{equation}
    as desired.
\end{proof}

The following lemma is a direct application of \cref{lem:nonnegative}.

\begin{lemma}
\label{lem:not-colliding}
    Let $w$ be as in \cref{conv:weight-vector} and $S = [n]$. Let $N \in [n]$, ${T = \{t_1, t_2\} \subseteq ([n]\smallsetminus \{N\})}$ with $t_1\in [n-m]$, then the following inequality holds:
    \[
    \sum_{\substack{P\in\mathcal{P}_w(S)}}(-1)^{|S|-\ell(P)}\binom{\sum_i K(P)_i}{K(P)_1, \ldots, K(P)_{\ell(P)}}\le\sum_{\substack{P\in\mathcal{P}_w(S)\\t_2,N\,\textrm{not in}\\
    \textrm{same part of}\,P}}(-1)^{|S|-\ell(P)}\binom{\sum_i K(P)_i}{K(P)_1, \ldots, K(P)_{\ell(P)}}.
    \]
\end{lemma}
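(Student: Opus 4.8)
The plan is to rephrase the claimed inequality as the single assertion that the ``colliding'' contribution is nonpositive. If we let $A$ denote the difference between the right-hand and left-hand sides, then, since the full sum over $\calP_w(S)$ splits according to whether $t_2$ and $N$ lie in a common part,
\[
A = \sum_{\substack{P \in \calP_w(S) \\ t_2,\, N \text{ in the same part of } P}} (-1)^{|S| - \ell(P)} \binom{\sum_i K(P)_i}{K(P)_1, \ldots, K(P)_{\ell(P)}},
\]
and the lemma is equivalent to $A \le 0$. First I would record the structure of totally $w$-unstable partitions: since $w = (1^{(n-m)}, \varepsilon^{(m)})$ with $0 < \varepsilon < 1/m$, any part of total weight $\le 1$ is either a singleton containing one heavy mark or a set of light marks only (which then has weight at most $m\varepsilon < 1$ and is automatically admissible). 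In particular every heavy mark forms its own part. Consequently, if either $N$ or $t_2$ is heavy, no totally $w$-unstable partition can place them in a common part, so $A = 0$ and we are done. Thus I may assume both $N$ and $t_2$ are light. (The hypothesis $t_1 \in [n-m]$ is not needed for this reduction; it only records $t_2 \neq t_1$ and that $v_{\{t_1,t_2\}} \in V_{N,w}$ in the intended application.)

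Next, on the partitions in which $N$ and $t_2$ lie in a common part, I would set up a merge bijection: replace the two light marks $N, t_2$ by a single new light mark $\bullet$, obtaining a partition $\tilde P$ of the $(n-1)$-element ground set $\tilde S := ([n]\smallsetminus\{N, t_2\}) \cup \{\bullet\}$ for the heavy/light weight vector $\tilde w := (1^{(n-m)}, \varepsilon^{(m-1)})$. This is a bijection onto $\calP_{\tilde w}(\tilde S)$ because, with $0 < \varepsilon < 1/m$, any part consisting only of light marks is admissible on both sides, so gluing $\{N,t_2\}$ and ungluing $\bullet$ preserve total $w$-unstability. Assigning $\bullet$ the value $\tilde k_\bullet := k_N + k_{t_2} - 1$, a direct computation from the definition of the $P$-sequence shows that a part $\{N, t_2\}\cup L$ of $P$ and its image $\{\bullet\}\cup L$ in $\tilde P$ have equal $K$-values; hence the multinomial coefficient is preserved, while $\ell(\tilde P) = \ell(P)$ and $|\tilde S| = |S| - 1$. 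Each summand of $A$ is therefore carried to a summand of the \cref{lem:nonnegative} expression for $(\tilde S, \tilde w, \tilde K)$ with the sign reversed, giving
\[
A = -\sum_{\tilde P \in \calP_{\tilde w}(\tilde S)} (-1)^{|\tilde S| - \ell(\tilde P)} \binom{\sum_i K(\tilde P)_i}{K(\tilde P)_1, \ldots, K(\tilde P)_{\ell(\tilde P)}}.
\]
Applying \cref{lem:nonnegative} to the right-hand sum yields $A \le 0$, which is the claim.

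The main obstacle is the degenerate case $k_N = k_{t_2} = 0$, in which $\tilde k_\bullet = -1$ falls outside the nonnegativity hypothesis of \cref{lem:nonnegative}. I would handle this by noting that the merge does not change the $K$-value of the part containing $\bullet$, so under the standard convention that a multinomial coefficient with a negative lower entry equals $0$, a summand vanishes on one side of the bijection precisely when its partner vanishes on the other; the surviving terms are genuine nonnegative multinomials. One then checks that the inductive argument proving \cref{lem:nonnegative} applies verbatim to $(\tilde S, \tilde w, \tilde K)$: that proof already introduces merged weights of the form $\sum_{i\in I} k_i - |I|$, hence negative values, in its recursion, so admitting $\tilde k_\bullet = -1$ (and the correspondingly smaller ground set) requires no new idea. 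This bookkeeping around negative merged weights is the only delicate point; the sign reversal and the invariance of the multinomials under merging are routine.

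\emph{Summary of steps, in order.} (1) Reduce to $A \le 0$. (2) Use the heavy/light structure to dispose of the case where $N$ or $t_2$ is heavy. (3) Construct the sign-reversing merge bijection onto $\calP_{\tilde w}(\tilde S)$ and verify preservation of $\ell$ and of the multinomial coefficients. (4) Invoke \cref{lem:nonnegative}, taking care of the negative merged weight via the vanishing convention and the robustness of the inductive proof of \cref{lem:nonnegative}.
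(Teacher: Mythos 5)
Your proposal is correct and is essentially the paper's own proof: the same reduction to showing the ``colliding'' sum is nonpositive, the same observation that heavy marks are singletons in totally $w$-unstable partitions (disposing of the case where $N$ or $t_2$ is heavy), the same merge of the two light marks $N, t_2$ into a single light mark $\bullet$ with $k_\bullet = k_N + k_{t_2} - 1$ over the weight vector $(1^{(n-m)}, \varepsilon^{(m-1)})$, and the same sign-reversing appeal to \cref{lem:nonnegative}. Your extra care about the degenerate case $k_\bullet = -1$ (handled by the convention that multinomial coefficients with a negative lower entry vanish) flags a subtlety that the paper's proof passes over silently—\cref{lem:nonnegative} is stated only for nonnegative $K$, yet its own induction already produces negative merged weights—so this is a welcome refinement rather than a deviation; note also that your prose defines $A$ as RHS minus LHS while your display and conclusion treat $A$ as LHS minus RHS, a harmless sign slip in wording only.
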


\begin{proof}
    We first observe that the left hand side is
    \begin{align}
        &\sum_{\substack{P\in\mathcal{P}_w(S)\\t_2,N\,\textrm{not in}\\
        \textrm{same part of}\,P}}(-1)^{|S|-\ell(P)}\binom{\sum_i K(P)_i}{K(P)_1, \ldots, K(P)_{\ell(P)}}\\
        &+\sum_{\substack{P\in\mathcal{P}_w(S)\\t_2,N\,\textrm{in}\\
        \textrm{same part of}\,P}}(-1)^{|S|-\ell(P)}\binom{\sum_i K(P)_i}{K(P)_1, \ldots, K(P)_{\ell(P)}}.
    \end{align}
    The proof amounts to showing that the second term is not positive. 
    There are two cases, depending on the weights indexed by $t_2$ and $N$. 
    
    \un{Case 1}: If $w_{t_2} = 1$ or $w_N = 1$, then $t_2$ and $N$ are never in the same part of the partition and lemma follows immediately.
    
    \un{Case 2}: If $w_{t_2} = w_{N} = \varepsilon$, we prepare notations for the analysis that follows: 
    \begin{enumerate}
        \item Set $S' : =S\smallsetminus\{t_2,N\}\cup\{\bullet\}$.
        \item Let ${k_{\bullet} := k_{t_2} + k_{N} - 1}$ and $K'$ be the sequence obtained from $K$ by deleting $k_{t_2}$ and $k_N$ and appending $k_{\bullet}$.
        \item $w' :=(1^{(n-m)},\varepsilon^{(m-1)})$ is the weight vector obtained from $w$ by deleting the weights indexed by $t_2$ and $N$, and appending the weight of the mark $\bullet$ as $\varepsilon$.
    \end{enumerate} 
    Then 
    \begin{align}
        &\sum_{\substack{P\in\mathcal{P}_w(S)\\t_2,N\,\textrm{in}\\
        \textrm{same part of}\,P}}(-1)^{|S|-\ell(P)}\binom{\sum_i K(P)_i}{K(P)_{1}, \ldots, K(P)_{\ell(P)}}\\
        &= -\sum_{\substack{P\in\mathcal{P}_{w'}(S')}}(-1)^{|S'|-\ell(P)}\binom{\sum_i K'(P)_i}{K'(P)_1, \ldots, K'(P)_{\ell(P)}} \\
        &\le 0
    \end{align}
    by \cref{lem:nonnegative}, thus proving the lemma.
\end{proof}

\bibliographystyle{alpha}
\bibliography{bibliography.bib}

\end{document}